\documentclass[fleqn,preprint,3p,a4paper]{elsarticle}
\usepackage{amssymb}
\usepackage{amsmath}
\usepackage{amsthm}
\usepackage{dcolumn}
\usepackage{endnotes}
\usepackage{tabularx}
\usepackage[matrix,arrow]{xy}
\usepackage{wasysym}

\newtheorem{theorem}{Theorem}[section]
\newtheorem{proposition}[theorem]{Proposition}
\newtheorem{lemma}[theorem]{Lemma}
\newtheorem{corollary}[theorem]{Corollary}

\theoremstyle{definition}
\newtheorem{definition}[theorem]{Definition}

\newtheorem{remark}[theorem]{Remark}

\newcommand{\ir}{{\mathsf{Irr}}}
\newcommand\twoheaduparrow{\mathord{\rotatebox[origin=c]{90}{$\twoheadrightarrow$}}}
\newcommand\twoheaddownarrow{\mathord{\rotatebox[origin=c]{90}{$\twoheadleftarrow$}}}
\newcommand{\dda}{\twoheaddownarrow}
\newcommand{\dua}{\twoheaduparrow}

\newcommand{\cl}{{\rm cl}}
\newcommand{\ii}{{\rm int}}
\newcommand{\SI}{{\rm SI}}
\newcommand{\I}{{\rm I}}

\newcommand{\ua}{\mathord{\uparrow}}
\newcommand{\da}{\mathord{\downarrow}}

\journal{Open Mathematics}

\begin{document}

\begin{frontmatter}

\title{On GSI$_2$-convergence in $T_0$-spaces \tnoteref{t1}}
\author[X. Wen]{Xinpeng Wen}
\address[X. Wen]{School of Mathematics and Information Science, Nanchang Hangkong University, Nanchang 330063, China}
\ead{wenxinpeng2009@163.com}

\author[M. Bao]{Meng Bao}
\ead{mengbao95213@163.com}
\address[M. Bao]{School of Mathematics, Suqian University, Suqian 223800, China}

\author[W. Zhang]{Wenfeng Zhang\corref{mycorrespondingauthor}}
\cortext[mycorrespondingauthor]{Corresponding author}
\address[W. Zhang]{School of Mathematics and Computer Science, Jiangxi Science and Technology Normal University, Nanchang 330038, China}
\ead{zhangwenfeng2100@163.com}

\begin{abstract}

In this paper,we introduce the concept of GSI$_2$-convergence in $T_0$ spaces and the related concept of (strongly) QI$_2$-continuous spaces. It is proved that if GSI$_2$-convergence in $X$ is topological iff $X$ is strongly QI$_2$-continuous for any irreducible complete $T_0$ space $X$.

\end{abstract}

\begin{keyword}
Irreducible set; GSI$_2$-convergence; QI$_2$-continuous space; Strongly QI$_2$-continuous space;

\MSC 54A20; 06B35; 06F30

\end{keyword}

\end{frontmatter}

\section{Introduction}
Convergence and convergence class play an important role in both order theory and general topology (see \cite{GHKLMS, K}). For a topological space $(X, \tau)$ and a class $\mathcal L$ consisting of pairs $((x_i)_{i\in I}, x)$, where $(x_i)_{i\in I}$ is a net in $X$ and $x$ a point of $X$, the topology $\tau$ can naturally induce a convergence class as follows:

\begin{center}
$\mathcal{C}(\tau)=\{((x_i)_{i\in I}, x) : (x_i)_{i\in I} \mbox{ is a net in } X, x\in X \mbox{ and for any } U\in\mathcal \tau$, $x\in U$ implies that $(x_i)_{i\in I} \mbox{ is eventually in } U\}$.
\end{center}

\noindent And we can define a topology on $X$ associated with $\mathcal L$:

\begin{center}
$\mathcal O(\mathcal L)=\{U\subseteq X : ((x_i)_{i\in I}, x)\in\mathcal L$ and $x\in U$ imply
 $x_i\in U$ eventually$\}$.
\end{center}
It is easy to verify that $\tau=\mathcal O(\mathcal C(\tau))$. However, if $\mathcal L$ is not a convergence class in the sense of Kelley \cite{K}, then the convergence class $\mathcal C(\mathcal O(\mathcal L))\neq\mathcal L$, that is, the class $\mathcal L$ is not topological.

Numerous researchers have studied various types of convergences (see [1, 3--8]). With different convergence, they have not only proposed the corresponding continuity of posets (more generally, topological spaces) but also presented some links between order theory and topology. In \cite{GHKLMS}, it was proved that the lim-inf convergence in a dcpo $P$ is topological iff the poset $P$ is a continuous domain. This result was generalized to partially ordered sets (posets) in \cite{ZZ}. In \cite{CK}, it was proved that the GS-convergence in a dcpo $P$ is topological iff the poset $P$ is a quasi-continuous domain. In \cite{RX}, using the cut operator instead of joins, Ruan and Xu introduced and discussed $\mathcal S$-convergence and $\mathcal{GS}$-convergence in posets. They proved that a poset $P$ is s$_2$-continuous (resp., s$_2$-quasicontinuous) iff the $\mathcal S$-convergence (resp., the $\mathcal{GS}$-convergence) in $P$ is topological.

In his invited address at the 2013 Sixth International Symposium on Domain Theory, Jimmie Lawson underscored the imperative of cultivating the core of domain theory directly within $T_0$ spaces, as opposed to confining it to posets. In this direction, by using irreducible sets instead of directed sets, Zhao and Ho \cite{ZH} introduced the SI-topology on $T_0$ spaces as a generalisation of the Scott topology on posets. In \cite{ASHZ}, Andradi et al. defined SI-convergence in $T_0$ spaces and proved that for any $T_0$ space $X$ having condition (I$^*$), $X$ is an I-continuous space iff SI-convergence in $X$ is topological. Subsequently, in \cite{YX} Yang and Xu  introduce the concepts of SI$_2$-convergence and (strongly) I$_2$-continuous space, and proved that SI$_2$-convergence in $X$ is topological iff $X$ is strongly I$_2$-continuous.

In this paper, Similar to the definitions of SI$_2$-convergence and I$_2$-continuous space, we introduce the concept of GSI$_2$-convergence in $T_0$ spaces and the related concept of (strongly) QI$_2$-continuous spaces. We prove that if GSI$_2$-convergence in $X$ is topological iff $X$ is strongly QI$_2$-continuous for $T_0$ space $X$.

\section{Preliminaries}

 In this section, we briefly recall some basic concepts and results about ordered structures and $T_0$ spaces that will be used in the paper. For further details, we refer the reader to [1--2, 10--11].

For a poset $P$ and $A \subseteq P$, define $\ua A=\{x\in P: a\leq x \mbox{ for some }a\in A \}$ and $\da A=\{x \in P: x \leq a \mbox{ for some }a\in A\}$. For $x\in X$, let $\ua x=\ua\{x\}$ and $\da x=\da\{x\}$. A subset $A$ is called a \emph{lower set} (resp., an \emph{upper set}) if $A=\da A$ (resp., $A=\ua A$). If the set of upper bounds of~$A$ has a unique smallest element (that is, the set of upper bounds contains exactly one of its lower bounds), we call this element the \emph{least upper bound} and write it as~$\vee A$ or sup~$A$ (for supremum). Similarly the greatest lower bound is written as~$\wedge A$ or inf~$A$ (for infimum).
Define $A^{\ua}=\{u\in P: A\subseteq \da u\}$ (the sets of all upper bounds of $A$ in $P$) and $A^{\da}=\{v\in P: A\subseteq \ua v\}$ (the sets of all lower bounds of $A$ in $P$). The set $A^{\delta}=(A^{\ua})^{\da}$ is called the $cut$ of $A$ in $P$.
A nonempty subset $D$ of a poset $P$ is called \emph{directed} if every finite subset of $D$ has an upper bound in $D$. The set of all directed sets of $P$ is denoted by $\mathcal{D}(P)$. The poset $P$ is called a \emph{directed complete poset}, or \emph{dcpo} for short, if for any
$D\in \mathcal D(P)$, $\bigvee D$ exists in $P$.
 A subset $U$ of $P$ is \emph{Weakly Scott open} if (i) $U=\mathord{\uparrow}U$, and (ii) for any directed subset $D$ for which $D^{\delta}\cap U\neq\emptyset$  implies $D\cap U\neq\emptyset$. The topology formed by all the Scott open sets of $P$ is called the \emph{Weakly Scott topology}, written as $\sigma_2(P)$.  The upper sets of $P$ form the (\emph{upper}) \emph{Alexandroff topology} $\alpha(P)$. We say that a nonempty family $\mathcal{G}$ of sets is directed if given $G_1$, $G_2$ in the family $\mathcal{G}$, there exists $G\in \mathcal{G}$ such that $G_1$, $G_2\leq G$, i.e., $G\subseteq \uparrow G_2\cap \uparrow G_1$.
  For nonempty subsets $F$ and $G$ of $P$, we say $F$ approximates $G$ if for every directed subset $D\subseteq P$, $D^{\delta}\cap \uparrow G\neq\emptyset$ implies $d\in \uparrow F$ for some $d\in D$.
  A poset $P$ is called a $s_2$-quasicontinuous poset if for all $x\in P$, $\uparrow x$ is the directed (with respect to reverse inclusion) intersection of sets of the form $\uparrow F$, where $F$ approximates $\{x\}$ and $F$ is a finite set of $P$. Let $P^{(<\omega)}$ be the set of all nonempty finite subsets of P.

Let $X$ be a $T_0$ space. Let $\mathcal{O}(X)$ (resp.,$\Gamma(X)$) be the set of all open subsets (resp., closed subsets) of $X$. For a subset of $X$,  denote the closure of $A$ in $X$ by $\cl_{X}~A$ or simply by $\cl~A$ and the interior of $A$ in $X$ by $\ii_{X} A$ in $X$ or simply by $\ii ~ A$. The subspace of $X$ on $A$ is denote by $(A, \mathcal{O}(X)\mid_A)$. A nonempty subset $A$ of a $T_0$ space $X$ is called an \emph{irreducible} set if for any $F_1, F_2\in\Gamma(X)$, $A\subseteq F_1\cup F_2$ implies $A\subseteq F_1$ or $A\subseteq F_2$. We denote by $\ir(X)$  the set of all irreducible  subsets of $X$. The space $X$ is is said to be \emph{irreducible}-\emph{complete} if every irreducible subset of $X$ has a sup. Defined the specialization order of $X$: $x\leq y$ iff $x\in \cl \{y\} $. In the following, when a $T_0$ space is considered as a poset, the order always refers to the specialization order if no other explanation is given.
Let $\mathbf{up}(X)=\{\uparrow_X U : U\subseteq X\}$ (that is, the family of all upper subsets of $X$), $X^{(<\omega)}$ be the set of all nonempty finite subsets of $X$ and $\mathbf{Fin}~\!\!X=\{\ua F : F\in X^{(<\omega)}\}$. For a subset $M$ of $X$, the \emph{induced topology} or \emph{subspace topology} on $M$ is denoted by $\mathcal O(X)|_M$, that is, $\mathcal O(X)|_M=\{U\cap M : U\in \mathcal O(X)\}$. A nonempty family $\mathcal{G}$ of subsets of a set $X$ is called \emph{filtered} if given $G_1$, $G_2$ in the family $\mathcal{G}$, there exists $G_3\in \mathcal{G}$ such that $G_3\subseteq \uparrow G_2\cap \uparrow G_1$.

The following two lemmas are well-known and can be easily verified.

\begin{lemma}\label{subspace-irr-1}
Let $X$ be a space and $Y$ a subspace of $X$. Then the following conditions are equivalent for a
subset $A\subseteq Y$:
\begin{enumerate}[\rm (1)]
	\item $A$ is an irreducible subset of $Y$.
	\item $A$ is an irreducible subset of $X$.
	\item ${\rm cl}_X A$ is an irreducible closed subset of $X$.
\end{enumerate}
\end{lemma}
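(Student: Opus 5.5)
I will prove the cycle of implications (1)$\Rightarrow$(2)$\Rightarrow$(3)$\Rightarrow$(1). Two standard facts about closures in a subspace do all the work. First, for $B\subseteq Y$ we have $\cl_Y B=\cl_X B\cap Y$. Second, the closed subsets of $Y$ are exactly the sets $F\cap Y$ with $F\in\Gamma(X)$. Nonemptiness of $A$ is the same condition in all three statements, so only the covering condition in the definition of irreducibility needs attention.

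\emph{(1)$\Rightarrow$(2).} Suppose $A\subseteq F_1\cup F_2$ with $F_1,F_2\in\Gamma(X)$. Since $A\subseteq Y$, we get $A\subseteq (F_1\cap Y)\cup(F_2\cap Y)$. Both sets on the right are closed in $Y$, so by irreducibility in $Y$, $A\subseteq F_i\cap Y\subseteq F_i$ for some $i$.

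\emph{(2)$\Rightarrow$(1).} Conversely, let $A$ be irreducible in $X$ and suppose $A\subseteq G_1\cup G_2$ with $G_1,G_2$ closed in $Y$. Write $G_i=F_i\cap Y$ with $F_i\in\Gamma(X)$. Then $A\subseteq F_1\cup F_2$, so $A\subseteq F_i$ for some $i$. Since also $A\subseteq Y$, this gives $A\subseteq G_i$. Although the cycle does not require this direction, it is short, and it confirms (1)$\Leftrightarrow$(2) directly.

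\emph{(2)$\Rightarrow$(3).} Let $A$ be irreducible in $X$ and suppose $\cl_X A\subseteq F_1\cup F_2$ with $F_1,F_2\in\Gamma(X)$. Then $A\subseteq F_1\cup F_2$, so $A\subseteq F_i$ for some $i$. Since $F_i$ is closed, $\cl_X A\subseteq F_i$. Moreover $\cl_X A$ is nonempty because $A$ is.

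\emph{(3)$\Rightarrow$(2).} Suppose $\cl_X A$ is irreducible and $A\subseteq F_1\cup F_2$ with $F_1,F_2\in\Gamma(X)$. The set $F_1\cup F_2$ is closed, so $\cl_X A\subseteq F_1\cup F_2$. Irreducibility gives $\cl_X A\subseteq F_i$ for some $i$, and hence $A\subseteq F_i$. Together with (1)$\Rightarrow$(2) and (2)$\Rightarrow$(3), this closes the cycle (1)$\Rightarrow$(2)$\Rightarrow$(3)$\Rightarrow$(2)$\Rightarrow$(1), which gives the equivalence of all three conditions.

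There is no real obstacle here. The only point needing care is in (2)$\Rightarrow$(1): one must use that every closed set of $Y$ is the trace on $Y$ of a closed set of $X$. One must also keep the hypothesis $A\subseteq Y$ in view, since it is what lets us pass from $A\subseteq F_i$ back to $A\subseteq F_i\cap Y$.
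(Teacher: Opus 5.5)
Your proof is correct. The paper gives no argument of its own; it calls the lemma well known and easily verified. Your direct check is exactly the standard verification: you get (1)$\Leftrightarrow$(2) from the fact that closed sets of $Y$ are the traces $F\cap Y$ of closed sets $F$ of $X$, and (2)$\Leftrightarrow$(3) from the fact that a closed set contains $A$ if and only if it contains $\cl_X A$. The only cosmetic point is that the identity $\cl_Y B=\cl_X B\cap Y$ announced at the start is never used, so it can be dropped.
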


\begin{lemma}\label{continuous-mapping-irr}
If $f: X\rightarrow Y$ is continuous and $A\in\ir(X)$, then $f(A)\in\ir(Y)$.
\end{lemma}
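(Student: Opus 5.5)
The statement to prove is Lemma~\ref{continuous-mapping-irr}: if $f\colon X\to Y$ is continuous and $A\in\ir(X)$, then $f(A)\in\ir(Y)$. The plan is to check the definition of irreducibility directly by pulling closed sets back along $f$.

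First, $f(A)$ is nonempty, since $A$ is nonempty by the definition of an irreducible set. Next, let $F_1,F_2\in\Gamma(Y)$ with $f(A)\subseteq F_1\cup F_2$. Then
\[
A\subseteq f^{-1}(f(A))\subseteq f^{-1}(F_1\cup F_2)=f^{-1}(F_1)\cup f^{-1}(F_2).
\]
Because $f$ is continuous, the preimages $f^{-1}(F_1)$ and $f^{-1}(F_2)$ are closed in $X$. Irreducibility of $A$ in $X$ therefore gives $A\subseteq f^{-1}(F_1)$ or $A\subseteq f^{-1}(F_2)$. Applying $f$, we get $f(A)\subseteq f(f^{-1}(F_j))\subseteq F_j$ for the corresponding $j$. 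Hence $f(A)$ is irreducible in $Y$.

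There is no real obstacle here. The only steps needing care are that preimages of closed sets under a continuous map are closed, and the standard inclusions $A\subseteq f^{-1}(f(A))$ and $f(f^{-1}(F))\subseteq F$. No $T_0$ assumption is used.

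If one also wants the companion Lemma~\ref{subspace-irr-1}, the same style of argument works. Closed sets of the subspace $Y$ are exactly the sets $F\cap Y$ with $F\in\Gamma(X)$. Since $A\subseteq Y$, we have $A\subseteq (F_1\cap Y)\cup(F_2\cap Y)$ iff $A\subseteq F_1\cup F_2$, which gives (1)$\Leftrightarrow$(2). For (2)$\Leftrightarrow$(3), a closed set $F$ contains $A$ iff it contains ${\rm cl}_X A$.
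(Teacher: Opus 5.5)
Your proof is correct: pulling $F_1,F_2$ back along $f$ to closed sets of $X$, applying irreducibility of $A$, and using $f(f^{-1}(F_j))\subseteq F_j$ is the standard verification. The paper gives no proof of this lemma (it calls it well-known and easily verified), so your argument supplies exactly the routine check the paper leaves implicit.
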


A subset $W$ of a $T_0$-space $X$ is called \emph{saturated} if $W$ equals the intersection of all open sets containing it (equivalently, $W\in \mathbf{up}(X)$). We shall use $Q(X)$ to denote the set of all nonempty compact saturated subsets of $X$ and endow it with the \emph{Smyth order} $\sqsubseteq$, that is, for $K_1,K_2\in Q(X)$, $K_1\sqsubseteq K_2$ if{}f $K_2\subseteq K_1$. The upper Vietoris topology on $Q(X)$ is the topology that has $\{\Box U: U\in \mathcal{O}(X)\}$ as a base, where $\Box U=\{Q\in Q(X):Q\subseteq U\}$, and the resulting space is called the \emph{Smyth power space} or \emph{upper space} of $X$ and is denoted by $P_S(X)$. For $A\in \Gamma(X)$, define $\Diamond A=\{Q\in Q(X):Q\cap A\neq\emptyset\}$. Clearly, $Q(X)\setminus \Box U=Q(X)\setminus \Diamond (X\setminus U$ for any $U\in\mathcal O(X)$.

\begin{lemma}\label{xi-continuous}
	For a $T_0$ space $X$, then the canonical mapping $\xi_X : X\longrightarrow P_S(X), x\mapsto\uparrow x$, is a dense topological embedding.
\end{lemma}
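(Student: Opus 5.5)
We need to prove Lemma \ref{xi-continuous}: the map $\xi_X$ is well defined, continuous, injective, open onto its image, and has dense image in $P_S(X)$. The plan is to check these properties one at a time, using only the definition of the upper Vietoris topology through the basic sets $\Box U$ and the $T_0$ property.

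\textbf{Well-definedness, injectivity and continuity.}
\begin{enumerate}
\item For each $x\in X$, the set $\ua x$ is saturated, since it is an upper set in the specialization order. It is also compact: any open cover of $\ua x$ has a member containing $x$, and that member, being an upper set, contains all of $\ua x$. Hence $\xi_X(x)\in Q(X)$.
\item Injectivity follows from $T_0$. If $\ua x=\ua y$, then $x\le y$ and $y\le x$, and the specialization order of a $T_0$ space is antisymmetric, so $x=y$.
\item For continuity it suffices to compute preimages of basic open sets. For $U\in\mathcal O(X)$ we have $\xi_X^{-1}(\Box U)=\{x : \ua x\subseteq U\}=U$, because $U$ is an upper set. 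This set is open, so $\xi_X$ is continuous.
\end{enumerate}

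\textbf{Embedding.} The same identity gives openness onto the image: $\xi_X(U)=\Box U\cap \xi_X(X)$ for every $U\in\mathcal O(X)$. Indeed, $\ua x\in\Box U$ if and only if $x\in U$. Hence $\xi_X$ maps open sets of $X$ onto relatively open subsets of $\xi_X(X)$. Combined with injectivity and continuity, this shows that $\xi_X$ is a homeomorphism onto its image, i.e.\ a topological embedding.

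\textbf{Density.} Since the sets $\Box U$ form a base, it suffices to show that every nonempty basic open set $\Box U$ meets $\xi_X(X)$. Suppose $Q\in\Box U$. Then $Q$ is nonempty, so pick $x\in Q$. Since $Q$ is saturated, $\ua x\subseteq Q\subseteq U$, and therefore $\xi_X(x)\in\Box U$. Consequently $\xi_X(X)$ is dense in $P_S(X)$.

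The only point requiring care is the density step, where one must use that elements of $Q(X)$ are nonempty and saturated; the rest consists of direct computations with $\Box U$.
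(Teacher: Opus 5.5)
Your proof is correct and follows essentially the same route as the paper. Both use $\xi_X^{-1}(\Box U)=U$ for continuity, $\xi_X(U)=\xi_X(X)\cap\Box U$ for openness onto the image, the $T_0$ axiom for injectivity, and the fact that every nonempty basic open set $\Box U$ meets $\xi_X(X)$ for density. You additionally verify that $\ua x\in Q(X)$, and you make explicit that a nonempty $\Box U$ forces $U\neq\emptyset$ because members of $Q(X)$ are nonempty; the paper leaves both points implicit.
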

\begin{proof}
For $U\in\mathcal O(X)$, we have
$$\xi_X^{-1}(\Box U)=\{x\in X : ~\!\!\!\uparrow x\in\Box U\}=\{x\in X : ~\!\!\!\uparrow x\subseteq U\}=U, \hbox{~and}$$.
$$\hbox{~if~} U\neq\emptyset, \hbox{~then~} \emptyset\neq\{\uparrow x : x\in U\}=\xi_X (X)\cap\Box U.$$
\noindent So $\xi_X$ is continuous and $\xi_X (X)$ is dense in $P_S(X)$.
In addition, we have that $\xi_X(U)=\{\uparrow x : x\in U\}
=\{\uparrow x : ~\!\!\!\uparrow x\in\Box U\}
=\xi_X(X)\cap \Box U$,
\noindent which implies that $\xi_X$ is an open mapping to $\xi_X(X)$, as a subspace of $P_S(X))$.
As $X$ is $T_0$, $\xi_X$ is an injection. Thus $\xi_X$ is a dense topological embedding.
\end{proof}

A \emph{net} $(x_i)_{i\in I}$ in a set $X$ is a mapping from a directed set $I$ to $X$.  If $H(x)$ is a property of the elements $x\in X$, we say that $H(x)$ \emph{holds eventually} in the net $(x_i)_{i\in I}$ if there is a $i_0\in J$ such that $H(x_i)$ is true whenever $i_0\leq i$.

\begin{lemma}\emph{(\cite{HK})}\label{T Rudin}
(Topological Rudin Lemma) Let $X$ be a topological space and $\mathcal{A}$ an
irreducible subset of $Q(X)$ which is the the upper Vietoris space of all nonempty compact subsets. Any closed set $C\subseteq X$ that
meets all members of $\mathcal{A}$ contains an irreducible closed subset $A$ that still meets all
members of $\mathcal{A}$.
\end{lemma}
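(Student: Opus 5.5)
The plan is to obtain the required set $A$ by Zorn's Lemma, as a minimal closed subset of $C$ that still meets every member of $\mathcal{A}$, and then to show that minimality forces irreducibility. Let
$$\Phi=\{B\in\Gamma(X) : B\subseteq C \hbox{~and~} B\cap K\neq\emptyset \hbox{~for all~} K\in\mathcal{A}\},$$
ordered by inclusion. By hypothesis $C\in\Phi$, so $\Phi\neq\emptyset$. I will show that every chain in $\Phi$ has a lower bound in $\Phi$, pick a minimal element $A$, and prove that $A$ is irreducible.

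\emph{Chains.} Let $\{B_j\}_{j\in J}$ be a nonempty chain in $\Phi$ and put $B=\bigcap_j B_j$. Then $B$ is closed and $B\subseteq C$. Fix $K\in\mathcal{A}$. The sets $B_j\cap K$ are nonempty, closed relative to $K$, and totally ordered by inclusion, so they have the finite intersection property. Since $K$ is compact, $\bigcap_j (B_j\cap K)=B\cap K\neq\emptyset$. Hence $B\in\Phi$, and Zorn's Lemma yields a minimal $A\in\Phi$. This compactness step is the only place where the hypothesis that members of $\mathcal{A}$ are compact enters, and I expect it to be the main technical point.

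\emph{Irreducibility.} Since $\mathcal{A}$ is nonempty and $A$ meets its members, $A\neq\emptyset$. Suppose $A\subseteq F_1\cup F_2$ with $F_1,F_2\in\Gamma(X)$, but $A\not\subseteq F_1$ and $A\not\subseteq F_2$. Then $A\cap F_1$ and $A\cap F_2$ are closed subsets of $C$ properly contained in $A$. By minimality neither lies in $\Phi$, so there are $K_1,K_2\in\mathcal{A}$ with $K_i\cap A\cap F_i=\emptyset$. Thus $K_i\subseteq U_i:=X\setminus(A\cap F_i)$, which is open, so $K_i\in\Box U_i\cap\mathcal{A}$.

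Because $\mathcal{A}$ is irreducible in $P_S(X)$, any two open sets that meet $\mathcal{A}$ have intersection meeting $\mathcal{A}$. Since $\Box U_1\cap\Box U_2=\Box(U_1\cap U_2)$, there is $K\in\mathcal{A}$ with
$$K\subseteq U_1\cap U_2=X\setminus\bigl(A\cap(F_1\cup F_2)\bigr)=X\setminus A.$$
This contradicts $K\cap A\neq\emptyset$. Therefore $A$ is an irreducible closed subset of $C$ meeting all members of $\mathcal{A}$.
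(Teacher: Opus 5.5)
Your proof is correct. The paper gives no proof of this lemma and only cites Heckmann--Keimel [HK]; your argument is essentially the standard proof from that source: Zorn's lemma yields a minimal closed subset of $C$ meeting every member of $\mathcal{A}$ (compactness handles chains), and minimality together with irreducibility of $\mathcal{A}$, via $\Box U_1\cap\Box U_2=\Box(U_1\cap U_2)$, forces that subset to be irreducible.
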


\begin{lemma}\emph{(\cite{WHX})}\label{Rudin irr}
Let $X$ be a $T_0$-space and $\mathcal{F}\subseteq \mathbf{Fin}~\!\! X$. If $\mathcal F\in \ir(P_S(X))$,
 then there exists $A\in \ir(X) $ with $A\subseteq \bigcup \mathcal{F}  $ such that  $A\cap \ua F\neq \emptyset$ for any $\ua F\in \mathcal{F} $.
\end{lemma}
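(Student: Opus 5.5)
The plan is to reduce this statement to the Topological Rudin Lemma (Lemma \ref{T Rudin}), applied in the Smyth power space $P_S(X)$. Each $\ua F$ with $F\in X^{(<\omega)}$ is a nonempty compact saturated subset of $X$: a finite union of principal filters $\ua x$, each of which is compact. Hence $\mathbf{Fin}~\!\!X\subseteq Q(X)$, and $\mathcal F$ is an irreducible subset of $Q(X)$ with respect to the upper Vietoris topology.

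First, take $C=\cl\left(\bigcup\mathcal F\right)$, or more economically the closure of $\bigcup_{\ua F\in\mathcal F}F$. This is a closed set meeting every member of $\mathcal F$. Lemma \ref{T Rudin} then gives an irreducible closed set $B\subseteq C$ with $B\cap\ua F\neq\emptyset$ for all $\ua F\in\mathcal F$. The difficulty is that $B$ lies only in the closure, not in $\bigcup\mathcal F$ itself, so the conclusion $A\subseteq\bigcup\mathcal F$ needs a second step.

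To fix this, set $A=B\cap\bigcup\mathcal F$. This set meets every $\ua F$, since $B\cap\ua F\subseteq A$, and so it is nonempty. For irreducibility we use Lemma \ref{subspace-irr-1}: it suffices that $A$ is dense in $B$, i.e.\ $\cl A=B$, since then $\cl A$ is irreducible closed. This density is the main obstacle. I would obtain it by a sharper choice of $C$, namely by applying the Rudin lemma in its minimal form. Zorn's lemma applied to closed subsets of $C$ meeting all members of $\mathcal F$ gives a minimal such closed set $B$ (the intersection of a chain still meets each $\ua F$, because each $F$ is finite and $B\cap\ua F\neq\emptyset$ iff $B\cap F\neq\emptyset$, $B$ being a lower set). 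The proof of the topological Rudin lemma shows that this minimal $B$ is irreducible.

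Minimality then yields density. The set $\cl A$ is closed, contained in $B$, and meets every $\ua F$, because $B\cap \ua F\subseteq A$. By minimality $\cl A=B$, so $A$ is irreducible by Lemma \ref{subspace-irr-1}, lies in $\bigcup\mathcal F$, and meets each $\ua F$.

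The irreducibility of the minimal $B$ is argued as follows. Suppose $B\subseteq C_1\cup C_2$ with $C_1,C_2$ closed and $B\not\subseteq C_i$. By minimality there exist $\ua F_i\in\mathcal F$ with $B\cap C_i\cap\ua F_i=\emptyset$, so $\mathcal F\cap\Box(X\setminus(B\cap C_i))\neq\emptyset$ for $i=1,2$. These two sets are open in $P_S(X)$, so irreducibility of $\mathcal F$ gives some $\ua F\in\mathcal F$ missing both $B\cap C_1$ and $B\cap C_2$. Since $B\subseteq C_1\cup C_2$, this $\ua F$ misses $B$, a contradiction.
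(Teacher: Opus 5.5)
Your proof is correct. The Zorn-minimal closed set $B$ is irreducible by your $\Box(U_1\cap U_2)$ argument, and minimality forces $\cl(B\cap\bigcup\mathcal F)=B$, so $A=B\cap\bigcup\mathcal F$ is irreducible and meets every $\ua F$.

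The paper gives no proof of its own here; it cites \cite{WHX}. Your argument is the standard minimal-closed-set argument, the same device the paper uses in Proposition \ref{rudin yunyong}. Intersecting $B$ with $\bigcup_{\ua F\in\mathcal F}\min F$ instead gives, by the same minimality argument, the stronger conclusion $A\subseteq\bigcup_{\ua F\in\mathcal F}F$. That stronger form is what the later proofs actually invoke (Propositions \ref{3mt5}, \ref{Irr sup} and \ref{I-way below1}).
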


\begin{lemma}\emph{(\cite{WHX})}\label{Rudin irr closed set}
 Let $X$ be a $T_0$-space $X$ and $\mathcal{K}\in \ir(P_S(X))$. If $B$ is a closed set that intersects every element of $\mathcal{K}$. Then $\{\ua (K\cap B) : K\in \mathcal{K}\}\in \ir (P_S(X))$.
\end{lemma}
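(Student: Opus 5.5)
The statement to prove is Lemma~\ref{Rudin irr closed set}: given $\mathcal K\in\ir(P_S(X))$ and a closed $B$ meeting every member of $\mathcal K$, the family $\mathcal K_B=\{\ua(K\cap B): K\in\mathcal K\}$ is irreducible in $P_S(X)$. The plan is to realise $\mathcal K_B$ as a continuous image of an irreducible set and then apply Lemma~\ref{continuous-mapping-irr} together with Lemma~\ref{subspace-irr-1}.

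\textbf{Step 1 (membership).} Each $\ua(K\cap B)$ lies in $Q(X)$. Indeed, $K\cap B$ is nonempty by hypothesis, and it is compact because it is a closed subset of the compact set $K$. The saturation of a compact set is again compact, so $\ua(K\cap B)\in Q(X)$.

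\textbf{Step 2 (the map).} Put $\mathcal D=\Diamond B=\{K\in Q(X): K\cap B\neq\emptyset\}$, so that $\mathcal K\subseteq\mathcal D$. Define $f:\mathcal D\to P_S(X)$ by $f(K)=\ua(K\cap B)$, where $\mathcal D$ carries the subspace topology of $P_S(X)$. For continuity, take an open $U$ of $X$. Since $U$ is an upper set, $\ua(K\cap B)\subseteq U$ holds exactly when $K\cap B\subseteq U$, that is, when $K\subseteq U\cup(X\setminus B)$. Hence
$$f^{-1}(\Box U)=\Box\bigl(U\cup(X\setminus B)\bigr)\cap\mathcal D,$$
which is open in $\mathcal D$ because $U\cup(X\setminus B)$ is open in $X$. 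As the sets $\Box U$ form a base of $P_S(X)$, $f$ is continuous.

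\textbf{Step 3 (conclusion).} By Lemma~\ref{subspace-irr-1}, $\mathcal K$ is irreducible in the subspace $\mathcal D$. By Lemma~\ref{continuous-mapping-irr}, $f(\mathcal K)=\mathcal K_B$ is then irreducible in $P_S(X)$.

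The only delicate point is the continuity computation in Step 2. It uses two facts: that open sets are upper sets, and that the intersection with $B$ is the reason $f$ must be restricted to $\mathcal D$, since otherwise $K\cap B$ could be empty. Everything else is routine.

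If one prefers to avoid subspaces, the same argument can be run directly. Suppose $\mathcal K_B\subseteq\mathcal C_1\cup\mathcal C_2$ with each $\mathcal C_j$ closed. Then the pulled-back sets $\{K\in\mathcal K: f(K)\in\mathcal C_j\}$ are relatively closed in $\mathcal K$ and cover it. Irreducibility of $\mathcal K$ puts all of $\mathcal K$ into one of them, and hence puts $\mathcal K_B$ into one of $\mathcal C_1,\mathcal C_2$.
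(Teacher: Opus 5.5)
Your proof is correct. The decisive computation is the equivalence $\ua(K\cap B)\subseteq U\iff K\subseteq U\cup(X\setminus B)$ for open $U$, which holds because open sets are upper sets. It makes $K\mapsto\ua(K\cap B)$ continuous on the subspace $\Diamond B$ of $P_S(X)$, and Lemmas \ref{subspace-irr-1} and \ref{continuous-mapping-irr} then finish the argument. Step 1 is also needed and is done right: $K\cap B$ is a nonempty closed subset of a compact set, and its saturation is compact.

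The paper gives no proof of this lemma; it simply imports it from \cite{WHX}. So there is nothing in the paper to compare against. The usual proof of this fact is a direct check of irreducibility on basic open sets:
\begin{itemize}
\item If $\ua(K_1\cap B)\in\Box U$ and $\ua(K_2\cap B)\in\Box V$, then $K_1\in\Box(U\cup(X\setminus B))$ and $K_2\in\Box(V\cup(X\setminus B))$.
\item Irreducibility of $\mathcal K$ gives some $K_3\in\mathcal K$ with $K_3\subseteq (U\cap V)\cup(X\setminus B)$.
\item Hence $\ua(K_3\cap B)\in\Box U\cap\Box V$.
\end{itemize}
This is the same computation as yours. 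You have packaged it as continuity of a map, which is a little more modular. It also makes plain that Lemma \ref{Rudin irr open set} is just the special case $B=X\setminus U$. The direct version avoids the passage through subspaces.

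A minor simplification: defining $f$ only on $\mathcal K$ itself, rather than on all of $\Diamond B$, would suffice.
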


\begin{lemma}\emph{(\cite{WHX})}\label{Rudin irr open set}
 Let $X$ be a $T_0$-space $X$ and $\mathcal{K}\in \ir(P_S(X))$. If $U$  is an open set satisfying $K\setminus U\neq \emptyset$ for any  $K\in \mathcal{K}$.
 Then $\{\ua (K\setminus U) : K\in \mathcal{K}\}\in\ir (P_S(X))$.
\end{lemma}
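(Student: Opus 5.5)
The plan is to reduce the statement to Lemma~\ref{Rudin irr closed set} by passing to the complement of $U$. Put $B=X\setminus U$. Then $B\in\Gamma(X)$, and for every $K\in\mathcal K$ we have $K\setminus U=K\cap B$. The hypothesis $K\setminus U\neq\emptyset$ therefore says exactly that $B$ meets every member of $\mathcal K$. Lemma~\ref{Rudin irr closed set} then gives $\{\ua(K\cap B):K\in\mathcal K\}\in\ir(P_S(X))$, and this family is precisely $\{\ua(K\setminus U):K\in\mathcal K\}$.

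In case one wants an argument that does not rely on the cited lemma, I would prove it directly with Lemmas~\ref{subspace-irr-1} and~\ref{continuous-mapping-irr}. First, each $\ua(K\cap B)$ lies in $Q(X)$. Indeed, $K\cap B$ is a closed subset of a compact set, so it is compact. It is nonempty by hypothesis, and the saturation of a compact set is compact. Next, let $\Diamond B=\{Q\in Q(X):Q\cap B\neq\emptyset\}$, viewed as a subspace of $P_S(X)$, and define
$$f:\Diamond B\longrightarrow P_S(X),\qquad f(Q)=\ua(Q\cap B).$$

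The key step is to show that $f$ is continuous. It suffices to test subbasic open sets $\Box V$ with $V\in\mathcal O(X)$. Since $V$ is an upper set, $\ua(Q\cap B)\subseteq V$ holds iff $Q\cap B\subseteq V$, and this holds iff $Q\subseteq V\cup U$. Hence
$$f^{-1}(\Box V)=\Box(V\cup U)\cap\Diamond B,$$
which is open in the subspace $\Diamond B$.

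Finally, $\mathcal K\subseteq\Diamond B$ and $\mathcal K\in\ir(P_S(X))$, so by Lemma~\ref{subspace-irr-1} the set $\mathcal K$ is irreducible in the subspace $\Diamond B$. By Lemma~\ref{continuous-mapping-irr}, its image $f(\mathcal K)=\{\ua(K\setminus U):K\in\mathcal K\}$ is irreducible in $P_S(X)$.

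I do not expect a real obstacle. The only point needing care is the identity $f^{-1}(\Box V)=\Box(V\cup U)\cap\Diamond B$, which relies on $V$ being an upper set and on $B$ being the exact complement of $U$.
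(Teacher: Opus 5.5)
Your proof is correct. The paper gives no proof of this lemma and simply cites it alongside Lemma~\ref{Rudin irr closed set}, and your observation $K\setminus U=K\cap(X\setminus U)$ is exactly the one-line derivation of this open-set version from the closed-set version. Your self-contained argument via the continuous map $Q\mapsto\ua(Q\cap B)$ on the subspace $\Diamond B$ is also sound. Since it only uses that $B$ is closed, it proves Lemma~\ref{Rudin irr closed set} as well.
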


\begin{definition} (\cite{SAZ})\label{2dy7}
Let $X$ be a $T_0$-space. A subset $U$ of $X$ is called \emph{SI$_2$-open} if the following two conditions are satisfied:

(1) $U$ is an open set in $X$, and

(2) for any $F\in \ir(X)$, $F^{\delta}\cap U\neq\emptyset$ implies $F\cap U\neq\emptyset$.

The set of all SI$_2$-open sets in $X$ is denoted by $\mathcal O_{\SI_2}(X)$. It is straightforward to verify that $\mathcal O_{\SI_2}(X)$ is a topology on $X$, called the \emph{SI$_2$-topology}. The space $(X, \mathcal O_{\SI_2}(X))$ will also be simply written as SI$_2(X)$.
\end{definition}

\begin{proposition}\label{rudin yunyong}
Let $X$ be a $T_0$ space, $U$ be a SI$_2$-open set, $x\in X$, and $\mathcal{F}=\{\ua F:F\in X^{(<\omega)}\}\in \ir(P_S(X))$. If $\bigcap\mathcal{F}
\subseteq \uparrow x\subseteq U$,then  $F\subseteq U$ for some $\ua F\in\mathcal{F}$.
\end{proposition}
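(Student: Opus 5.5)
We argue by contradiction, using the Rudin-type Lemma~\ref{Rudin irr}. Here $\mathcal F$ is read as a subfamily of $\mathbf{Fin}~\!\!X$ that is irreducible in $P_S(X)$. Suppose that for every $\ua F\in\mathcal F$ we have $F\not\subseteq U$. Since $U$ is open, it is an upper set, so $F\not\subseteq U$ is equivalent to $\ua F\setminus U\neq\emptyset$. Put $B=X\setminus U$. Then $B$ is a closed set meeting every member of $\mathcal F$.

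By Lemma~\ref{Rudin irr closed set}, the family $\mathcal F'=\{\ua(\ua F\cap B):\ua F\in\mathcal F\}$ is again irreducible in $P_S(X)$. Each $\ua(\ua F\cap B)=\ua(F\cap B)$ is the upper set of a nonempty finite set, so $\mathcal F'\subseteq\mathbf{Fin}~\!\!X$. Lemma~\ref{Rudin irr} then gives some $A\in\ir(X)$ with $A\subseteq\bigcup\mathcal F'$ and $A\cap\ua(F\cap B)\neq\emptyset$ for all $\ua F\in\mathcal F$. To keep $A$ outside $U$, we replace it by $A\cap B$, or more directly apply Lemma~\ref{T Rudin} to the closed set $B$. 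Either way we obtain an irreducible (closed) set $A\subseteq B$ that meets every $\ua F$, $\ua F\in\mathcal F$. Hence $A\cap U=\emptyset$.

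Next we show $x\in A^{\delta}$, which is the key step. Let $u\in A^{\ua}$, so $A\subseteq\da u$. For each $\ua F\in\mathcal F$, pick $a\in A\cap\ua F$. Then $a\le u$ and $a\in\ua F$, so $u\in\ua F$. Thus $u\in\bigcap\mathcal F\subseteq\ua x$, i.e.\ $x\le u$. Since $u\in A^{\ua}$ was arbitrary, $x$ is a lower bound of $A^{\ua}$, so $x\in A^{\delta}$.

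Since $x\in\ua x\subseteq U$, we get $A^{\delta}\cap U\neq\emptyset$. Because $U$ is SI$_2$-open and $A\in\ir(X)$, it follows that $A\cap U\neq\emptyset$. This contradicts $A\subseteq B=X\setminus U$, so $F\subseteq U$ for some $\ua F\in\mathcal F$.

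The main obstacle is producing an irreducible set that lies \emph{inside} $B$ while still meeting every $\ua F$. Lemma~\ref{Rudin irr} alone yields only $A\subseteq\bigcup\mathcal F$. The fix is to apply it to the shrunken family $\mathcal F'$ supplied by Lemma~\ref{Rudin irr closed set}, or to use the topological Rudin Lemma~\ref{T Rudin}, which directly gives an irreducible closed subset of $B$. If $A$ is taken inside $\bigcup\mathcal F'=\ua(\bigcup F\cap B)$ without being contained in $B$, we can still pass to the closed set $\cl A\cap B$, using that the upper bounds of $A$ are unchanged by closure.
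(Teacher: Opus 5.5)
Your proof is correct and follows the paper's argument: contradiction, the Topological Rudin Lemma to produce an irreducible closed $A$ disjoint from $U$ that meets every member of $\mathcal F$, then $x\in A^{\delta}$ because $A^{\ua}\subseteq\bigcap\mathcal F\subseteq\ua x$, and finally SI$_2$-openness of $U$. Applying the Topological Rudin Lemma directly to $B=X\setminus U$ and $\mathcal F$ is a mild streamlining of the paper's use of $\cl\,G$ with the shrunken family $\{\ua(F\setminus U)\}$. Drop your fallback options of passing to $A\cap B$ or $\cl\,A\cap B$, since intersecting an irreducible set with a closed set need not preserve irreducibility. For example, $\{a,b,t\}$ with $a,b<t$ under the Alexandroff topology is irreducible, but its intersection with the closed set $\{a,b\}$ is not.
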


\begin{proof}
Assume, on the contrary, that $\ua F\nsubseteq U$ for all $\ua F\in\mathcal{F}$. Then by Lemma \ref{Rudin irr open set} $\{\ua (F\setminus U) : \ua F\in \mathcal{F}\}\in\ir (P_S(X))$. For any $\ua H\in \mathbf{Fin}~\!\!X$ ($H$ is finite), it is easy to see that $\ua H=\ua min(H)$. Let $G=\bigcup \{min(F\setminus U) : \ua F\in\mathcal F\}$ and $C=\cl~ G$. Then $C\cap \ua (F\setminus U)\neq\emptyset$ for all $\ua F\in\mathcal F$. By Lemma  \ref{T Rudin}, $C$ contains a minimal irreducible closed subset $A$ that still meets all
members of $\{\ua (F\setminus U) : \ua F\in \mathcal{F}\}$. Then $\mathop{\bigcap}\limits_{d \in A}\uparrow d\subseteq  \bigcap \{\ua (F\setminus U) : \ua F\in\mathcal F\}\subseteq \bigcap \mathcal F\subseteq \uparrow x\subseteq U$ and $A^{\delta}\cap U\neq\emptyset$. As $U$ is an SI$_2$-open set, there exists $a\in A$ such that $a\in U$ and hence $a\in U\cap\cl~ G$. So $U\cap G\neq\emptyset$. Select an $u\in U\cap G$. Then $u\in \min(F\setminus U)\subseteq F\setminus U$ for some $\ua F\in\mathcal F$, which contradicts $u\in U$. Therefore, $\ua F\subseteq U$ for some $\ua F\in\mathcal{F}$.

\end{proof}

\begin{definition} (\cite[Definition 3.1]{ZH})\label{SI-open} 
Let $X$ be a $T_0$-space. A subset $U$ of $X$ is called \emph{SI}-\emph{open} if the following two conditions are satisfied:
\begin{enumerate}[\rm (1)]
\item $U$ is an open set in $X$, and

\item for any $F\in \ir(X)$, $\bigvee F\in U$ implies $F\cap U\neq\emptyset$.
\end{enumerate}
The set of all SI-open sets in $X$ is denoted by $\mathcal O_{\SI}(X)$. It is straightforward to verify that $\mathcal O_{\SI}(X)$ is a topology on $X$, called the \emph{SI}-\emph{topology}. The space $(X, \mathcal O_{\SI}(X))$ will also be simply written as $SI(X)$.
\end{definition}

If the topological space is irreducible-complete, we have the following result.

\begin{proposition}\emph{(\cite{WHX})}
Let $X$ be an irreducible-complete $T_0$-space, $U$ be an SI-open set and $\mathcal{F}\subseteq \mathbf{Fin}~\!\! X$. If $\mathcal F\in \ir(P_S(X))$ and $\mathop{\bigcap}\mathcal{F} \subseteq U$, then  $\ua F\subseteq U$ for some $\ua F\in\mathcal{F}$.
\end{proposition}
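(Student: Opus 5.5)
We argue by contradiction, following the proof of Proposition \ref{rudin yunyong}, with the SI-condition $\bigvee A\in U\Rightarrow A\cap U\neq\emptyset$ taking the place of the cut condition. Suppose $\ua F\nsubseteq U$ for every $\ua F\in\mathcal F$. Since $U$ is open, it is an upper set, so $\ua F\nsubseteq U$ means exactly $F\setminus U\neq\emptyset$, hence $\ua F\setminus U\neq\emptyset$. By Lemma \ref{Rudin irr open set},
\[
\mathcal F'=\{\ua (F\setminus U) : \ua F\in\mathcal F\}\in\ir(P_S(X)),
\]
and each member $\ua(F\setminus U)$ lies in $\mathbf{Fin}~\!\!X$ because $F\setminus U$ is finite.

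Next we extract an irreducible set. Put $G=\bigcup\{\min(F\setminus U):\ua F\in\mathcal F\}$ and $C=\cl~G$. Since $\ua(F\setminus U)=\ua\min(F\setminus U)$, the closed set $C$ meets every member of $\mathcal F'$. By Lemma \ref{T Rudin}, $C$ contains an irreducible closed set $A$ that still meets every member of $\mathcal F'$. As $X$ is irreducible-complete, $\bigvee A$ exists.

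The key step is to show $\bigvee A\in\bigcap\mathcal F$. Fix $\ua F\in\mathcal F$. Then $A$ meets $\ua(F\setminus U)$, so some $a\in A$ satisfies $a\geq f$ for some $f\in F$. Hence $\bigvee A\geq a\geq f$, which gives $\bigvee A\in\ua F$. Therefore $\bigvee A\in\bigcap\mathcal F\subseteq U$. Because $U$ is SI-open and $A\in\ir(X)$, there is $a\in A\cap U$.

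Finally we derive the contradiction. From $a\in U\cap\cl~G$ and $U$ open, we get $U\cap G\neq\emptyset$. So some $u\in U$ lies in $\min(F\setminus U)\subseteq X\setminus U$ for some $\ua F\in\mathcal F$, which is impossible. Hence $\ua F\subseteq U$ for some $\ua F\in\mathcal F$.

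The main obstacle is the key step: $A$ meets every member only inside $\ua(F\setminus U)$, so we must check that this still forces $\bigvee A\in\ua F$ for each $F$. It does, via $\ua(F\setminus U)\subseteq\ua F$ and the fact that $\bigvee A$ lies above every element of $A$. The rest is the same contradiction argument as in Proposition \ref{rudin yunyong}.
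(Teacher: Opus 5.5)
Your proof is correct and is essentially the paper's argument: the paper only cites this statement without proof, but your proof is exactly its proof of Proposition~\ref{rudin yunyong}, with the SI-condition on $\bigvee A$ replacing the cut condition, and your key step ($A\cap\ua(F\setminus U)\neq\emptyset$ for all $\ua F\in\mathcal F$ gives $\bigvee A\in\bigcap\mathcal F$) is sound. You could shorten it by applying Lemma~\ref{T Rudin} directly to $\mathcal F$ and the closed set $X\setminus U$: this gives an irreducible closed $A\subseteq X\setminus U$ with $\bigvee A\in\bigcap\mathcal F\subseteq U$, an immediate contradiction that needs neither Lemma~\ref{Rudin irr open set} nor the set $G$.
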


\section{GSI$_2$-convergence}

In this section, we introduce and study the GSI$_2$-convergence in $T_0$-spaces. We show that for an  $T_0$-space $X$, the GSI$_2$-convergence in $X$ induces the SI$_2$-topology on $X$.

\begin{definition}(\cite{RX}) \label{GELB}
Let $P$ be a poset and $(x_j)_{j\in J}$ a net in $P$. $F\subseteq X$ is called a Generalized eventual lower bound of a net ($(x_j)_{j\in J}$ in $P$, if $F$ is finite and there exists $k\in J$ such that $x_j\in \uparrow F$ eventually.
\end{definition}

\begin{definition}(\cite{RX}) \label{GS-converges}
Let $P$ be a poset, a net $(x_i)_{i\in I}$
in $P$ is GS$_2$-converges to $x\in P$ if and only if there exists a directed
set $\mathcal{F}=\{F: F\in X^{(<\omega)}\}$ such that

(1) $ \mathop{\bigcap}\limits_{F \in \mathcal{F}}\uparrow F\subseteq \uparrow x$ ($a\in X$), and

(2) $\mathcal{F}$ is a set of Generalized eventual lower bounds of $(x_i)_{i\in I}$.
\end{definition}

It is known that the GS$_2$-converges structure induces the weakly Scott topology (see \cite{RX}). The following proposition shows that every set of Generalized eventual lower bounds of a net can be completely described by upper sets.

\begin{proposition}(\cite{WHX})\label{4.3}
Let $P$ be a poset, $\mathcal{G}\subseteq P^{(<\omega)}$ and $(x_i)_{i\in I}$ be a net in $P$. Then the following conditions are equivalent:
\begin{enumerate}[\rm (1)]

\item $\mathcal{G}$ is a family of Generalized eventual lower bounds of $(x_i)_{i\in I}$.

\item For any upper set $U$, $\{\ua G : G\in \mathcal{G}\}\cap \Box U\neq\emptyset$  implies $x_i\in U$ eventually, where $\Box U=\{K\in Q(X) : K\subseteq U\}$.
\end{enumerate}

\end{proposition}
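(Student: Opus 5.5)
We prove the two implications separately. Throughout, note that for a finite $G$ the set $\ua G$ is compact and saturated, so $\ua G\in Q(X)$. Hence $\{\ua G : G\in\mathcal G\}\cap\Box U\neq\emptyset$ says exactly that $\ua G\subseteq U$ for some $G\in\mathcal G$. The proof reduces to comparing ``$x_i\in\ua G$ eventually'' with ``$x_i\in U$ eventually'' for upper sets $U$.

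For (1)$\Rightarrow$(2), let $U$ be an upper set and suppose $\ua G\subseteq U$ for some $G\in\mathcal G$. By (1), $G$ is a generalized eventual lower bound. So there is $i_0\in I$ such that $x_i\in\ua G$ for all $i\geq i_0$. Since $\ua G\subseteq U$, this gives $x_i\in U$ for all $i\geq i_0$, which is (2).

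For (2)$\Rightarrow$(1), fix $G\in\mathcal G$ and take the upper set $U=\ua G$. Then $\ua G\in\{\ua G' : G'\in\mathcal G\}\cap\Box U$, so this intersection is nonempty. By (2), $x_i\in\ua G$ eventually. Since $G$ is finite, $G$ is a generalized eventual lower bound of $(x_i)_{i\in I}$, as required.

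There is no real obstacle. The only point needing care is the membership $\ua G\in Q(X)$, which is needed so that $\Box U$, defined as a subset of $Q(X)$, actually contains $\ua G$. It holds because a finite union of principal filters is compact: any open cover of $\ua G$ has, for each of the finitely many $g\in G$, a member containing $g$, and that member contains $\ua g$ since open sets are upper sets. Moreover $\ua G$ is saturated because it is an upper set. If the statement is read for a bare poset $P$, we interpret $Q$ via the Alexandroff topology, in which the same finite-union argument applies.
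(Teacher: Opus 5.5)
Your proof is correct and follows the paper's argument exactly: (1)$\Rightarrow$(2) transfers ``$x_i\in\ua G$ eventually'' through $\ua G\subseteq U$, and (2)$\Rightarrow$(1) applies (2) to the upper set $U=\ua G$. Your extra check that $\ua G$ is a nonempty compact saturated set in the Alexandroff topology, so that it actually lies in $\Box U$, is a detail the paper leaves implicit.
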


\begin{proof}
(1) $\Rightarrow$ (2): As $\{\ua G : G\in \mathcal{G}\}\cap \Box U\neq\emptyset$, there is $G\in \mathcal G$ such that $\ua G\subseteq U$. By assumption, $x_i\in \uparrow G$ eventually. So $x_i\in U$ eventually.

(2) $\Rightarrow$ (1): For $F\in \mathcal G$, let $U=\ua F$. Then $U$ is an upper set and  $\ua F\in \{\ua G : G\in \mathcal{G}\}\cap \Box U$. By hypothesis, $x_i\in U$ eventually, that is, $x_i\in\uparrow F$  eventually. Thus $F$ is a generalized eventual lower bound of $(x_i)_{i\in I}$.
\end{proof}

By considering Proposition \ref{4.3} and a poset $P$ endowed with the Alexandrov topology on it, the definition of GS$_2$-converges can be rephrased in a topological way as follows: a net $(x_i)_{i \in I}$ in $P$ GS$_2$-converges to $ x \in P $ if and only if
there exists an $\mathcal{F}\subseteq P^{(<\omega)}$ such that $\{\ua F : F\in\mathcal F\}\in \ir (P_S(P,\alpha(P)))$ and the following two conditions are satisfied:

(i) $ \mathop{\bigcap}\limits_{F \in \mathcal{F}}\uparrow F\subseteq \uparrow x$, and

(ii) for any $U\in\alpha(P)$, $\mathcal{F}\cap \square U\neq\emptyset$ implies $x_i\in U$ eventually.

Lifting the above to the realm of \( T_0 \) spaces, we have the following definition.

\begin{definition} \label{GSI-converges}

A net $(x_i)_{i\in I}$ is called \emph{$GSI_2$}-\emph{convergent} to a point $x$ in a~$T_0$-space $X$ if there exists an $\mathcal{F}\subseteq X^{(<\omega)}$ such that $\{\ua F : F\in\mathcal F\}\in \ir (P_S(X))$ and the following two conditions are satisfied:
\begin{enumerate}[\rm (i)]
\item for any $U\in\mathcal O(X)$, $\{\ua F : F\in \mathcal{F}\}\bigcap \square U\neq\emptyset$ implies $x_i\in U$ eventually, and

\item $ \mathop{\bigcap}\limits_{F \in \mathcal{F}}\uparrow F\subseteq \uparrow x$.

\end{enumerate}

\noindent And in this case, we write $(x_i)_{i\in I}\stackrel{GSI_2}\longrightarrow x$. Let $\mathcal{GSI}_2(X)=\{((x_i)_{i\in I}, x) : (x_i)_{i\in I} \mbox{~is~a~net~in~} X, x\in X \mbox{~and~} (x_i)_{i\in I}\stackrel{GSI_2}\longrightarrow x\}$.
\end{definition}

\begin{remark}\label{GSIremark} For a $T_0$ space $X$, we have the following statements:

(1) The constant net $(x)_{i\in I}$ in $X$ with value $x$ GSI$_2$-converges to $x$.

(2) If $(x_i)_{i\in I}\stackrel{GSI_2}\longrightarrow x$ in $X$, then $(x_i)_{i\in I}\stackrel{GSI_2}\longrightarrow y$ for any $y\leq x$. Thus the GSI-convergence points of a net are generally not unique.

(3) Let $P$ be a poset. Then the GSI$_2$-convergence in $(P, \alpha(P))$ coincides with the
GS$_2$-converges in $P$.
\end{remark}

\begin{lemma}\label{Irr net}
Let $X$ be a $T_0$ space. For every $F \in \ir(X)$ and $x\in F^{\delta}$, there exists a net $(x)_{i\in I}\subseteq F$
such that $(x)_{i\in I}$  GSI$_2$-convergence to $x$ .
\end{lemma}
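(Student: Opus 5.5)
We will build the net from $F$ itself and use the singleton family $\mathcal F=\{\{d\}: d\in F\}$ as the witness required by Definition \ref{GSI-converges}. Concretely, let $I=\{(d,V): d\in F,\ V\in\mathcal O(X),\ V\cap F\neq\emptyset\}$. We preorder $I$ by $(d_1,V_1)\le(d_2,V_2)$ iff $V_2\subseteq V_1$, and we put $x_{(d,V)}=d'$, where $d'\in V\cap F$ is a chosen element. The natural choice is to have the net range over $F$ and follow it into every open set meeting $F$. Equivalently, and more simply, we may take the index set $I=\{(V,e): V\in\mathcal O(X),\ e\in V\cap F\}$, directed by reverse inclusion in the first coordinate, with $x_{(V,e)}=e$. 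Directedness of $I$ is not automatic: given $V_1,V_2$ open sets each meeting $F$, we need $V_1\cap V_2\cap F\neq\emptyset$. This is exactly irreducibility of $F$: if $F\cap V_1\cap V_2=\emptyset$, then $F\subseteq (X\setminus V_1)\cup(X\setminus V_2)$, so $F$ lies in one of these closed sets, a contradiction. Thus $I$ is a directed set and the net lies in $F$.

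Next we verify that $\{\ua\{d\}: d\in F\}=\xi_X(F)$ is irreducible in $P_S(X)$. This follows from Lemma \ref{xi-continuous} together with Lemma \ref{continuous-mapping-irr}, since $\xi_X$ is continuous and $F\in\ir(X)$.

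We then check condition (i). Suppose $\{\ua d: d\in F\}\cap\Box U\neq\emptyset$ for an open $U$, i.e.\ $\ua d\subseteq U$ for some $d\in F$. Then $U$ is an open set meeting $F$, so $(U,d)\in I$. For every $(V,e)\ge(U,d)$ we have $V\subseteq U$, hence $x_{(V,e)}=e\in V\subseteq U$. So $x_i\in U$ eventually.

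Finally we check condition (ii): we need $\bigcap_{d\in F}\ua d\subseteq\ua x$. The left side is exactly $F^{\ua}$, the set of upper bounds of $F$ in the specialization order. Since $x\in F^\delta=(F^{\ua})^{\da}$, the point $x$ is a lower bound of $F^{\ua}$, i.e.\ $F^{\ua}\subseteq\ua x$. This gives (ii), and hence $(x_i)_{i\in I}\stackrel{GSI_2}\longrightarrow x$.

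The only delicate point is the directedness of the index set, which is where irreducibility of $F$ is used (besides the irreducibility of $\xi_X(F)$). If one prefers to avoid the pair indexing, the fallback is to index by open sets meeting $F$ and use the axiom of choice to select an element of $V\cap F$. Condition (ii) is pure order-theoretic unwinding of the cut.
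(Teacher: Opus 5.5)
Your proof is correct and uses essentially the same construction as the paper. You take the net indexed by pairs $(V,e)$ with $e\in V\cap F$, directed by reverse inclusion of the first coordinate (directedness coming from irreducibility of $F$), with the singleton family $\{\{e\}: e\in F\}$ as witness. You also supply two justifications the paper only asserts: that $\{\ua e : e\in F\}=\xi_X(F)$ is irreducible in $P_S(X)$, via the continuity of $\xi_X$, and that $\bigcap_{e\in F}\ua e = F^{\ua}\subseteq \ua x$ follows from $x\in F^{\delta}$.
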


\begin{proof}
Let $x\in F^{\delta}$. Define $I=\{(U,e)\in \mathcal O(X)\times F: e\in U\}$ and equip $I$ with $\leq$ defined as follows:
$(U_1,e_1)\leq (U_2,e_2)$ iff $U_2\subseteq U_1$. Irreducibility of $F$ gives that $I$ is directed. For every $(U,e)\in I$ ,we let $x_{(U,e)} = e$, $\mathcal{F}=\{\{ e\}: e\in F\}$ and $\{\ua F: F\in\mathcal{F}\}\in \ir(P_S(X))$. Therefore, $\cap\{\uparrow e: e\in F\}\subseteq \uparrow x$. If $U\in \mathcal O(X)$ and $\Box U \cap \mathcal{F}\neq\emptyset $,  then there is  $d \in F$ such that $(U,d)\in I$. For every $(V,e)\geq (U,d)$, we have $e\in V\subseteq U$. Hence $(x_i)_{i\in I}\stackrel{GSI_2}\longrightarrow x$.
\end{proof}

The following conclusion tells us that the topology induced by the GSI$_2$-convergence structure is precisely the SI$_2$-topology. This fact can be regarded as a topological parallel of the well-known fact in domain theory:  GS$_2$-convergence structure induces the weakly Scott topology.

\begin{theorem} \label{SItopo=GSItopo}
For any $T_0$ space $X$, the two topologies $\mathcal O(\mathcal{GSI}_{2}(X))$ and $\mathcal O_{\SI_2}(X)$ coincide, that is, $\mathcal O_{\SI_2}(X)=\{U\subseteq P:$ whenever $(x_i)_{i\in I}\stackrel{GSI_2}\longrightarrow x$ and $x\in U$, then  $x_i\in U$  eventuall$\}$ .
\end{theorem}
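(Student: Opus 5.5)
We prove the two inclusions separately.

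\emph{Inclusion $\mathcal O(\mathcal{GSI}_2(X))\subseteq \mathcal O_{\SI_2}(X)$.} Let $U\in \mathcal O(\mathcal{GSI}_2(X))$; we check the two conditions of Definition \ref{2dy7}.

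First, $U$ is open in $X$. Take $x\in U$ and a net $(x_i)_{i\in I}$ converging to $x$ in the original topology. Let $\mathcal F=\{\{x\}\}$, so $\{\ua x\}$ is a singleton and hence irreducible in $P_S(X)$. Condition (ii) of Definition \ref{GSI-converges} holds trivially. For condition (i), if $\ua x\subseteq V$ with $V$ open, then $x\in V$, so $x_i\in V$ eventually. Hence $(x_i)\stackrel{GSI_2}\longrightarrow x$, and therefore $x_i\in U$ eventually. A set into which every topologically convergent net to each of its points eventually falls is open, so $U\in\mathcal O(X)$, and in particular $U$ is an upper set.

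Second, let $F\in\ir(X)$ with $F^\delta\cap U\neq\emptyset$, and pick $x\in F^\delta\cap U$. Lemma \ref{Irr net} gives a net in $F$ that GSI$_2$-converges to $x$. It is eventually in $U$, so $F\cap U\neq\emptyset$. (Strictly, the lemma needs irreducibility of $\{\ua e: e\in F\}$ in $P_S(X)$; this follows from Lemma \ref{xi-continuous} and Lemma \ref{continuous-mapping-irr}, since it is the image $\xi_X(F)$.)

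\emph{Inclusion $\mathcal O_{\SI_2}(X)\subseteq \mathcal O(\mathcal{GSI}_2(X))$.} Let $U$ be SI$_2$-open, $x\in U$, and $(x_i)\stackrel{GSI_2}\longrightarrow x$ via $\mathcal F$. Then $\bigcap\mathcal F\subseteq\ua x\subseteq U$, because $U$ is open and hence an upper set. Proposition \ref{rudin yunyong} yields $\ua F\in\mathcal F$ with $\ua F\subseteq U$, that is, $\{\ua F: F\in\mathcal F\}\cap\Box U\neq\emptyset$. Since $U\in\mathcal O(X)$, condition (i) of Definition \ref{GSI-converges} gives $x_i\in U$ eventually.

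\emph{Main obstacle.} All the real content sits in Proposition \ref{rudin yunyong}, which rests on the topological Rudin lemma; if we take it as given, this direction is immediate. In the first direction, the one point needing care is the openness of $U$, which we obtain from the singleton family as above.
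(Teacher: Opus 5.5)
Your proof is correct and follows the paper's argument step for step. You get openness from the singleton family $\{\{x\}\}$: you cite the net characterization of open sets, whereas the paper builds the net $x_W\in W\setminus V$ by contradiction, which amounts to the same thing. You then get the SI$_2$ condition from Lemma \ref{Irr net} and the reverse inclusion from Proposition \ref{rudin yunyong}, exactly as the paper does. Your remark that $\{\ua e : e\in F\}$ is irreducible in $P_S(X)$ because it is the continuous image $\xi_X(F)$ fills in a step the paper leaves implicit.
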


\begin{proof}

Let $V\in \mathcal O(\mathcal{GSI}_{2}(X))$. Firstly, we show that $V\in \mathcal O (X)$. In fact, suppose not. Then there is $x\in V$ such that $W \nsubseteqq V$ for every $W\in \mathcal N (x)=\{U\in\mathcal O (X):x\in U\}$. We know that $\mathcal N (x)$ which equip with reverse inclusion order is a directed posets. For any $W\in \mathcal N (x)$ let $x_W\in W\setminus V$ to form a net $(x_W)_{W\in \mathcal N (x)}$.
Let $\mathcal F=\{\{x\}\}$. Then $\{\ua F : F\in\mathcal F\}=\{\ua x\}\in \ir(P_S(X))$ and $\cap\{\ua F :F\in\mathcal F\}=\uparrow x$.
Let $U\in \mathcal O (X)$ with $x\in U$. It is easy to  know that   $x_W\in U$ eventually. Hence $(x_W)_{W\in \mathcal N (x)}\stackrel{GSI_2}\longrightarrow x$. As $V\in \mathcal O(\mathcal{GSI}_2(X))$, $x_W\in V$ for some $W\in \mathcal N (x)$, which is a contradiction. Hence $V\in \mathcal O(X)$. Next, we proof that $V\in \mathcal O_{\SI_2}(X)$. Suppose that $F \in Irr(X)$ such that  $x\in F^{\delta}\cap V\neq\emptyset$. By Lemma \ref{Irr net}, there exists a net $(x)_{i\in I}\subseteq F$ such that it GSI$_2$-converges to $x$. As $x\in V$ and $V\in \mathcal O(\mathcal{GSI}_2(X))$, we have $F\cap V\neq \emptyset$.

Now, let  $V\in \mathcal O_{\SI_2}(X)$ and $(x_i)_{i\in I}\stackrel{GSI_2}\longrightarrow x\in V$.
Then $V\in \mathcal O(X)$ and there exists $\mathcal{F}\subseteq X^{(<\omega)}$ such that $\{\ua F : F\in\mathcal F\}\in \ir(P_S(X))$ and conditions (i) and (ii) in Definition \ref{GSI-converges} are satisfied.
By Proposition \ref{rudin yunyong}, there is a $F\in \mathcal{F}$ such that $F\subseteq V$. Hence $\mathcal{F}\cap \Box V\neq \emptyset$ and $x_i\in V$ eventually. So  $V\in \mathcal O(\mathcal{GSI}_2(X))$.
\end{proof}

\begin{lemma}(\cite{SAZ})\label{SI continuous function}
 A continuous mapping $f : (X,\mathcal{ O }(X))\rightarrow (Y,\mathcal{ O }(Y))$ is a continuous mapping between $(X,\mathcal{ O }_{\SI_2}(X))$ and $(Y,\mathcal{ O }_{\SI_2}(Y))$ if and only if $f(F^{\delta})\subseteq (f(F))^{\delta}$ holds for any $F \in Irr(X)$ .
\end{lemma}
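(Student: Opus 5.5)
We prove the two implications separately; the reverse direction is the substantive one.

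\emph{Necessity.} Assume $f$ is continuous from $(X,\mathcal O_{\SI_2}(X))$ to $(Y,\mathcal O_{\SI_2}(Y))$, and fix $F\in\ir(X)$ and $x\in F^{\delta}$. We must show $f(x)\in(f(F))^{\delta}$. By Lemma \ref{Irr net} there is a net $(x_i)_{i\in I}$ in $F$ with $(x_i)_{i\in I}\stackrel{GSI_2}\longrightarrow x$. By Theorem \ref{SItopo=GSItopo} this net converges to $x$ in the topology $\mathcal O_{\SI_2}(X)$. By continuity, $(f(x_i))$ converges to $f(x)$ in $\mathcal O_{\SI_2}(Y)$, so every SI$_2$-open set containing $f(x)$ meets $f(F)$.

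It then remains to show that $f(x)\in\cl_{\SI_2}(f(F))$ forces $f(x)\in(f(F))^{\delta}$. This is the main obstacle. It requires some closedness of cuts in the SI$_2$-topology, and in general that closedness may fail.

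I would therefore try a more direct route first. Use continuity of $f$ for the original topologies, so that $f$ is monotone in the specialization orders. Let $u$ be an upper bound of $f(F)$; we want $f(x)\le u$. Consider $f^{-1}(Y\setminus\da u)$. Whether $Y\setminus\da u$ is SI$_2$-open in $Y$ is unclear: $\da u=\cl\{u\}$ is closed, and a cut of an irreducible set inside $\da u$ stays inside $\da u$, provided every element of the cut lies below $u$. Take $G\in\ir(Y)$ with $G\subseteq\da u$. Then $u\in G^{\ua}$, so every $y\in G^{\delta}$ satisfies $y\le u$. Hence $Y\setminus\da u$ is SI$_2$-open. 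Its preimage $V=f^{-1}(Y\setminus\da u)$ is SI$_2$-open in $X$ and misses $F$. Since $V$ is SI$_2$-open and $F\cap V=\emptyset$, we get $F^{\delta}\cap V=\emptyset$, so $x\notin V$, i.e.\ $f(x)\le u$. This avoids the closure issue entirely and settles necessity.

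\emph{Sufficiency.} Assume $f(F^{\delta})\subseteq(f(F))^{\delta}$ for all $F\in\ir(X)$, and let $V\in\mathcal O_{\SI_2}(Y)$. Then $f^{-1}(V)$ is open in $X$ because $f$ is continuous. Let $F\in\ir(X)$ with $x\in F^{\delta}\cap f^{-1}(V)$. By Lemma \ref{continuous-mapping-irr}, $f(F)\in\ir(Y)$. By hypothesis, $f(x)\in(f(F))^{\delta}\cap V$. Since $V$ is SI$_2$-open, $f(F)\cap V\neq\emptyset$, hence $F\cap f^{-1}(V)\neq\emptyset$. So $f^{-1}(V)$ is SI$_2$-open, and $f$ is continuous between the SI$_2$-topologies.
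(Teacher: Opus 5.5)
The paper gives no proof of this lemma; it is quoted from \cite{SAZ} and only applied later, in Proposition \ref{3mt5}. So there is no in-paper argument to compare against. Your proof is correct and self-contained.

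Sufficiency is the standard check: $f^{-1}(V)$ is open, $f(F)\in\ir(Y)$ by Lemma \ref{continuous-mapping-irr}, and $f(x)\in(f(F))^{\delta}\cap V$ forces $f(F)\cap V\neq\emptyset$. For necessity, your key fact is the right one: $Y\setminus\da u$ is SI$_2$-open. It is open since $\da u=\cl\{u\}$, and $G\subseteq\da u$ gives $u\in G^{\ua}$, hence $G^{\delta}\subseteq\da u$. Pulling this set back along $f$ and using that $F$ is irreducible gives $f(x)\le u$ for every $u\in(f(F))^{\ua}$.

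One correction to your write-up. The ``obstacle'' you describe in your first attempt is not real, and the claim that closedness of cuts ``may fail'' is false. Your own observation shows that every cut $B^{\delta}=\bigcap_{u\in B^{\ua}}\da u$ is SI$_2$-closed, with $B^{\delta}=Y$ when $B^{\ua}=\emptyset$. So the net route through Lemma \ref{Irr net} and Theorem \ref{SItopo=GSItopo} would also have finished, since $f(x)\in\cl_{\SI_2}f(F)\subseteq(f(F))^{\delta}$.

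The direct route is still the better one, for two reasons:
\begin{itemize}
\item It does not depend on the GSI$_2$-convergence machinery at all.
\item It shows that necessity uses neither continuity nor monotonicity of $f$ for the original topologies; only sufficiency does.
\end{itemize}
You can simply delete the abandoned first attempt and the unused appeal to monotonicity.
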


\begin{proposition}    \label{3mt5}
Let $X, Y$ be a $T_0$ spaces and $f$ be a continuous mapping from $X$ to $Y$. Then the following two conditions are equivalent:

(1) $f$ is a continuous mapping from $\SI_2(X)$ to $\SI_2(Y)$.

(2) For any net $(x_i)_{i\in I}$ and $x\in X$ , $(x_i)_{i\in I}\stackrel{GSI_2}\longrightarrow x$ in $X$ implies $f(x_i)_{i\in I}\stackrel{GSI_2}\longrightarrow f(x)$ in $Y$.
\end{proposition}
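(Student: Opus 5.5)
For (2) $\Rightarrow$ (1) I will use Theorem \ref{SItopo=GSItopo}, which identifies $\mathcal O_{\SI_2}$ with $\mathcal O(\mathcal{GSI}_2(\cdot))$. Take $V\in\mathcal O_{\SI_2}(Y)$ and set $U=f^{-1}(V)$; we need $U\in\mathcal O_{\SI_2}(X)$. It suffices to check $U\in\mathcal O(\mathcal{GSI}_2(X))$. Suppose $(x_i)_{i\in I}\stackrel{GSI_2}\longrightarrow x$ with $x\in U$. By (2), $(f(x_i))_{i\in I}\stackrel{GSI_2}\longrightarrow f(x)\in V$. Since $V\in\mathcal O(\mathcal{GSI}_2(Y))$, we get $f(x_i)\in V$ eventually, that is, $x_i\in U$ eventually. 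Hence $U$ is SI$_2$-open, and this direction is purely formal.

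For (1) $\Rightarrow$ (2), suppose $(x_i)_{i\in I}\stackrel{GSI_2}\longrightarrow x$ is witnessed by $\mathcal F\subseteq X^{(<\omega)}$, with $\{\ua F:F\in\mathcal F\}\in\ir(P_S(X))$. The natural candidate witness in $Y$ is $\mathcal G=\{f(F):F\in\mathcal F\}$, which is again a family of nonempty finite sets.

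\emph{Irreducibility.} I will show that $\ua F\mapsto \ua f(F)$ is well defined and continuous as a map $P_S(X)\to P_S(Y)$ on the relevant sets. Well-definedness holds because $f$ is monotone in the specialization order, since $f$ is continuous. Continuity follows from $\ua f(F)\subseteq W$ iff $F\subseteq f^{-1}(W)$ iff $\ua F\in\Box f^{-1}(W)$ for $W\in\mathcal O(Y)$. Lemma \ref{continuous-mapping-irr} then gives $\{\ua G:G\in\mathcal G\}\in\ir(P_S(Y))$.

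\emph{Condition (i).} If $\ua f(F)\subseteq W$ for some $W\in\mathcal O(Y)$, then $\ua F\subseteq f^{-1}(W)\in\mathcal O(X)$. By (i) for the original witness, $x_i\in f^{-1}(W)$ eventually, so $f(x_i)\in W$ eventually.

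\emph{Condition (ii) is the main obstacle:} we need $\bigcap_{F\in\mathcal F}\ua f(F)\subseteq\ua f(x)$, and the intersection does not commute with images. I would argue by contradiction. Take $y\in\bigcap_F\ua f(F)$ with $f(x)\not\leq y$, so $y\notin\ua f(x)$.
\begin{enumerate}[\rm (a)]
\item The closed set $B=f^{-1}(\da y)$ meets every $\ua F$, so Lemma \ref{Rudin irr closed set} applies.
\item Lemma \ref{T Rudin}, or Lemma \ref{Rudin irr} applied to the resulting family, yields an irreducible $A\subseteq B$ meeting every $\ua F$. Then $\bigcap_{a\in A}\ua a\subseteq\bigcap\mathcal F\subseteq\ua x$, so every lower bound of $A^{\ua}$ lies below every upper bound of $A$. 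In particular $x\in A^{\delta}$: any upper bound $u$ of $A$ satisfies $u\in\bigcap_{a\in A}\ua a\subseteq \ua x$.
\item By (1) and Lemma \ref{SI continuous function}, $f(x)\in f(A^{\delta})\subseteq (f(A))^{\delta}$.
\item Since $A\subseteq f^{-1}(\da y)$, $y$ is an upper bound of $f(A)$, hence $f(x)\leq y$, a contradiction.
\end{enumerate}
Thus (ii) holds for $\mathcal G$, and $(f(x_i))_{i\in I}\stackrel{GSI_2}\longrightarrow f(x)$.

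The delicate point is step (b): producing an irreducible subset of $f^{-1}(\da y)$ that meets all $\ua F$, and verifying that $x\in A^{\delta}$. This is where the topological Rudin lemma is essential, and where I would check carefully that irreducibility in $P_S(X)$ of the restricted family is preserved.
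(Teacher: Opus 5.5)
Your proposal is correct and follows the paper's proof essentially step for step: the same witness $\{f(F):F\in\mathcal F\}$, continuity of $Q\mapsto\ua f(Q)$ to transfer irreducibility, and a Rudin-lemma argument producing an irreducible $A\subseteq f^{-1}(\da y)$ with $x\in A^{\delta}$, which is then combined with Lemma \ref{SI continuous function}. The only cosmetic differences are two: you argue condition (ii) by contradiction, and for (2)$\Rightarrow$(1) you invoke Theorem \ref{SItopo=GSItopo} on both $X$ and $Y$, whereas the paper checks SI$_2$-openness of $f^{-1}(V)$ directly via Lemma \ref{Irr net}.
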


\begin{proof}

(1) $\Rightarrow$ (2): Because $f$ is a continuous mapping, $f$ is order-preserving. Suppose that $(x_i)_{i\in I}\stackrel{GSI}\longrightarrow x$ in $X$. Now we prove that $f(x_i)_{i\in I}\stackrel{GSI_2}\longrightarrow f(x)$ in $Y$. As $(x_i)_{i\in I}\stackrel{GSI_2}\longrightarrow x$,
 there exists $\mathcal{F}\subseteq X^{(<\omega)}$ such that $\{\ua F : F\in\mathcal F\}\in \ir(P_S(X))$ and conditions (i) and (ii) in Definition \ref{GSI-converges} are satisfied.
 Because $\hat{f}^{-1}(\Box U)=\{Q\in P_S(X):\uparrow f(Q)\subseteq U\}=\{Q\in P_S(X):Q\subseteq f^{-1}(U)\}=\Box f^{-1}(U)$, then $\hat{f}:P_s(X)\longrightarrow P_s(Y)(Q\longmapsto \uparrow f(Q))$ is a continuous mapping.
Then $\mathcal{F}^{\backprime}=\{\ua f(F):F\in \mathcal{F}\}\in \ir(P_S(Y))$.
Next, we show that  $\mathop{\bigcap}\limits_{F \in \mathcal{F}}\uparrow f(F)\subseteq \uparrow x$.
 Let $a\in \mathop{\bigcap}\limits_{F \in \mathcal{F}}\uparrow f(F)$.
 Then $\downarrow a \cap f(F)\neq \emptyset$ for any $F\in \mathcal{F}$.
 Hence $f^{-1}(\downarrow a) \cap F \neq \emptyset$ for any $F\in \mathcal{F}$. As $f^{-1}(\downarrow a)$ is closed in $SI_2(X)$, by Lemma \ref{Rudin irr closed set} and Lemma \ref{Rudin irr}  there exists $D\in Irr(X) $ with $D\subseteq \mathop{\bigcup}\limits_{F \in \mathcal{F}} (f^{-1}(\downarrow a) \cap F) $ such that  $D\cap  (f^{-1}(\downarrow a) \cap F)\neq \emptyset$ for any $F\in \mathcal{F} $.
 Hence $f(D)\subseteq \downarrow a$ and $\mathop{\bigcap}\limits_{d \in D} \uparrow  d\subseteq \mathop{\bigcap}\limits_{F \in \mathcal{F}} \uparrow (f^{-1}(\downarrow a) \cap F)$ $\subseteq\mathop{\bigcap}\limits_{F \in \mathcal{F}} \uparrow  F\subseteq \uparrow x$. Thus, $x\in D^{\delta}$ and $f(D)\subseteq \downarrow a$. As $f(D^{\delta})\subseteq (f(D))^{\delta}$, we have $f(x)\in f(D^{\delta})\subseteq(f(D))^{\delta}\subseteq \downarrow a$.
 Therefore, $f(x_i)_{i\in I}\stackrel{GSI_2}\longrightarrow f(x)$ satisfy the conditions (ii) of Definition \ref{GSI-converges}.
 For $\forall U\in \mathcal{O}(Y)$, if $\mathcal{F} ^{\backprime}\cap \Box U\neq\emptyset$, then $\mathcal{F} \cap \Box f^{-1}(U)\neq\emptyset$. As $(x_i)_{i\in I}\stackrel{GSI_2}\longrightarrow x$, we have that $x_i\in f^{-1}(U)$ eventually. hence $f(x_i)\in U$ eventually. So $f(x_i)_{i\in I}\stackrel{GSI_2}\longrightarrow f(x)$.

(2) $\Rightarrow$ (1): Let $V\in\mathcal O_{\SI_2}(Y)$. By the continuity of $f : X \rightarrow Y$, we have $f^{-1}(V)\in\mathcal O(X)$. Let $F\in Irr(X)$ with $x\in F^{\delta}\cap f^{-1}(V)\neq\emptyset$. By Lemma \ref{Irr net}, there is a net $(x)_{i\in I}\subseteq F$
such that it GSI$_2$-convergence to $x$. By the assumption, the net $(f(x_i))_{i\in I}$ GSI$_2$-converges to $f(x)$. By Theorem \ref{SItopo=GSItopo}, $(f(x_i))_{i\in I}$ converges to $f(x)\in V$ with respect to $\mathcal{O}_{\SI_2}(Y)$. This implies $f(x_i)\in V$ eventually. Thus, $x_i\in f^{-1}(V)$ eventually. Hence $f^{-1}(V)\cap F\neq\emptyset$. We conclude that $f^{-1}(V)\in \mathcal O_{\SI_2}(Y)$, and therefore (1) holds.

\end{proof}

\section{QI$_2$-continuous spaces and strongly QI$_2$-continuous spaces}


In \cite{ASHZ}, Andradi et al. proved that SI-convergence structure in $X$ is topological iff X is I-continuous for any $T_0$ space $X$ having condition (I$^*$).
In \cite{YX} Yang and Xu  introduce the concepts of SI$_2$-convergence and (strongly) I$_2$-continuous space, and proved that SI$_2$-convergence in $X$ is topological iff $X$ is strongly I$_2$-continuous.
 In \cite{RX}, it proved that a poset $P$ is s$_2$-continuous (resp., s$_2$-quasicontinuous) iff the $\mathcal S$-convergence (resp., the $\mathcal{GS}$-convergence) in $P$ is topological.
This naturally raises a question whether there is a characterization of $T_0$ spaces for the GSI$_2$-convergence to be topological. In this section, we shall give such a characterisation.

\begin{definition}\label{I-way below}
Let $X$ be a $T_0$ space, $x\in X$ and $A, B\subseteq X$. We say that $A$ is \emph{I$_2$-way-below} $B$, in symbols $A\ll_{\I_2} B$, if for any irreducible set $D$ in $X$, $ D^{\delta}\cap  B\neq\emptyset$ implies $A\cap\cl D\neq\emptyset$. We write $A\ll_{\I_2} x$ for $A\ll_{\I_2} \{x\}$ and $y\ll_{\I_2} B$ for $\{y\}\ll_{\I_2} B$. For $F\in X^{(<\omega)}$, we write $\dda_{\I_2}F=\{x\in X: x\ll_{\I_2} F\}$ and $\dua_{\I_2}F=\{x\in X: F\ll_{\I_2} x\}$.
\end{definition}

The following fact is simple and the proof is omitted.

\begin{remark}
For a $T_0$ space $X$, Let $G$, $H$, $K$, $M\subseteq X$. Then:

(i) $G\ll_{\I_2} H$ $\Leftrightarrow$ $G\ll_{\I_2} h$ for all $h\in H$;

(ii) $G\ll_{\I_2} H$ $\Leftrightarrow$ $\uparrow G\ll_{\I_2}\uparrow H$;

(iii) $G\ll_{\I_2} H$ $\Rightarrow$ $G\leq H$, that is,$\uparrow H\subseteq \uparrow G$ ;

(iv) $G\leq H\ll_{\I_2} K\leq M$ $\Rightarrow$ $G\ll_{\I_2} M$.
\end{remark}

In the following proposition, we have a connection between I$_2$-way-below relation and GSI$_2$-convergence
structure.

\begin{proposition}\label{I-way below1}
Let $X$ a $T_0$ space. Then $F\ll_{\I_2}x$ if and only if for every net $(x_i)_{i\in I}$, $(x_i)_{i\in I}\stackrel{GSI_2}\longrightarrow x$ implies $x_i\in U$ eventually whenever $F\subseteq U$ for any $U\in \mathcal{O}(X)$.
\end{proposition}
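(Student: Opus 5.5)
We prove the two directions separately. Throughout, $F$ is a nonempty subset of $X$, in the applications a finite one, and $U$ ranges over $\mathcal O(X)$.

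\emph{Necessity.} Assume $F\ll_{\I_2}x$. Let $(x_i)_{i\in I}\stackrel{GSI_2}\longrightarrow x$ be witnessed by $\mathcal F\subseteq X^{(<\omega)}$ with $\{\ua G: G\in\mathcal F\}\in\ir(P_S(X))$, satisfying (i) and (ii) of Definition \ref{GSI-converges}. Let $U$ be open with $F\subseteq U$. By (i), it suffices to find $G\in\mathcal F$ with $\ua G\subseteq U$.

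Suppose instead that $\ua G\setminus U\neq\emptyset$ for every $G\in\mathcal F$. By Lemma \ref{Rudin irr open set}, the family $\{\ua(G\setminus U): G\in\mathcal F\}$ is irreducible in $P_S(X)$, and it consists of elements of $\mathbf{Fin}~\!\!X$. Lemma \ref{Rudin irr} then gives $D\in\ir(X)$ with $D\subseteq\bigcup_{G}\ua(G\setminus U)\subseteq X\setminus U$ such that $D$ meets every $\ua(G\setminus U)$.

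Now take any upper bound $u$ of $D$. For each $G$, $u$ lies above some element of $D\cap\ua(G\setminus U)$, so $u\in\ua G$. Hence $D^{\ua}\subseteq\bigcap_G\ua G\subseteq\ua x$, and therefore $x\in D^{\delta}$.

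Since $F\ll_{\I_2}x$, we get $F\cap\cl D\neq\emptyset$. But $D\subseteq X\setminus U$ and $X\setminus U$ is closed, so $\cl D\subseteq X\setminus U$, while $F\subseteq U$. This is a contradiction, so some $\ua G$ lies in $U$.

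\emph{Sufficiency.} Assume the net condition holds. Let $D\in\ir(X)$ with $x\in D^{\delta}$; we must show $F\cap\cl D\neq\emptyset$.

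By Lemma \ref{Irr net}, there is a net $(x_i)_{i\in I}$ with all $x_i\in D$ and $(x_i)_{i\in I}\stackrel{GSI_2}\longrightarrow x$. The lemma is stated for $x\in F^{\delta}$ with $F$ irreducible, so it applies with $D$ in that role.

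Suppose $F\cap\cl D=\emptyset$. Then $U=X\setminus\cl D$ is open and contains $F$. By hypothesis $x_i\in U$ eventually. But every $x_i$ lies in $D\subseteq\cl D$, which contradicts $x_i\in U$. Hence $F\cap\cl D\neq\emptyset$, and so $F\ll_{\I_2}x$.

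\emph{Main obstacle.} The substantive step is the Rudin-type argument in necessity: producing an irreducible $D$ outside $U$ whose cut contains $x$. This is the same pattern as Proposition \ref{rudin yunyong}.

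One care point is that Lemma \ref{Rudin irr open set} is stated for members of $Q(X)$. We therefore need each $\ua(G\setminus U)$ to be a nonempty compact saturated set. This holds because $G\setminus U$ is finite and nonempty, and we may take $G=\min G$ when passing between $\ua G$ and $G$.

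If $D^{\ua}$ is empty, the inclusion $D^{\ua}\subseteq\ua x$ holds vacuously, and then $x\in D^{\delta}$ is automatic.
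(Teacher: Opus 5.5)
Your overall route is the paper's. In necessity, a Rudin-type argument produces an irreducible $D$ with $x\in D^{\delta}$ lying outside $U$. In sufficiency, Lemma \ref{Irr net} is combined with the open set $X\setminus\cl D$, which you make explicit where the paper leaves it implicit. Your sufficiency argument is correct. So is your verification that every upper bound of $D$ lies in $\bigcap_G\ua G\subseteq\ua x$, hence $x\in D^{\delta}$.

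The gap is the inclusion $\bigcup_G\ua(G\setminus U)\subseteq X\setminus U$, which is false. $U$ is open, hence an upper set, so the up-closure of points outside $U$ generally enters $U$. In the Sierpi\'nski space $\{0<1\}$ with $U=\{1\}$ and $G=\{0\}$ one gets $\ua(G\setminus U)=X$.

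The statement of Lemma \ref{Rudin irr} you invoke only gives $D\subseteq\bigcup\mathcal F$, the union of the up-sets $\ua(G\setminus U)$. With that, you genuinely cannot conclude $D\cap U=\emptyset$: in the same example $D=X$ satisfies the lemma's conclusion and meets $U$. Since $\cl D\subseteq X\setminus U$ is exactly what produces your contradiction, necessity is incomplete as written.

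The paper uses the sharper conclusion $D\subseteq\bigcup_G(G\setminus U)$ with $D\cap(G\setminus U)\neq\emptyset$ for all $G$. These are the finite sets themselves, not their up-closures, and that union is disjoint from $U$. You can obtain it by taking a minimal closed set $A$ in the Rudin lemma and intersecting it with $\bigcup_G(G\setminus U)$.

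A simpler repair is to apply the Topological Rudin Lemma \ref{T Rudin} directly to the closed set $C=X\setminus U$. This $C$ meets every $\ua G$ because $\ua G\not\subseteq U$. You get an irreducible closed $A\subseteq X\setminus U$ meeting every $\ua G$. Your upper-bound argument then gives $x\in A^{\delta}$. From $F\ll_{\I_2}x$ you get $F\cap A\neq\emptyset$, contradicting $F\subseteq U$. This bypasses Lemmas \ref{Rudin irr open set} and \ref{Rudin irr} entirely.
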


\begin{proof}
Necessity: As $(x_i)_{i\in I}\stackrel{GSI_2}\longrightarrow x$, then there exists $\mathcal{F}\subseteq X^{(<\omega)}$ such that $\{\ua G : G\in\mathcal F\}\in \ir(P_S(X))$ and conditions (i) and (ii) in Definition \ref{GSI-converges} are satisfied. So $\mathop{\bigcap}\limits_{G \in \mathcal{F}} \uparrow G \subseteq\uparrow x $.
Now we show that there is a $G\in \mathcal{F}$ such that $G\subseteq U\in \mathcal{O}(X)$. Suppose not. By Lemma \ref{Rudin irr} and Lemma \ref{Rudin irr open set}, there exists  an
irreducible set $D$ in $X$ such that $D\subseteq \mathop{\bigcup}\limits_{G \in \mathcal{F}} (G\setminus U) $ and  $D\cap (G\setminus U)\neq \emptyset$ for any $G\in \mathcal{F} $.
 Thus, $\mathop{\bigcap}\limits_{d \in D} \uparrow d \subseteq \mathop{\bigcap}\limits_{G \in \mathcal{F}} \uparrow G\subseteq\uparrow x\subseteq U$ and $x\in U\cap D^{\delta}$. By $F\ll_{\I_2}x$, we have that $F \cap\cl D\neq\emptyset$.
By $F\subseteq U\in \mathcal{O}(X)$, we know that $U \cap\cl D\neq\emptyset$. Thus, $U \cap D\neq\emptyset$ which is
a contradiction. Therefore, there is a $G\in \mathcal{F}$ such that $G\subseteq U$. As $(x_i)_{i\in I}\stackrel{GSI_2}\longrightarrow x$, we have that $x_i\in U$ eventually.

Sufficiency: Let $D\in \ir(X)$ and $x\in D^{\delta}$. By Lemma \ref{Irr net}, there is a net $(x_i)_{i\in I}\subseteq D$ such that $(x_i)_{i\in I}\stackrel{GSI_2}\longrightarrow x$.
Let $F\subseteq U$ for any $U\in \mathcal{O}(X)$.
By hypothesis, $x_i\in U$ eventually, thus $D\cap U\neq\emptyset$, hence $F\cap \cl D\neq\emptyset$.  Therefore, $F\ll_{\I_2}x$.
\end{proof}

\begin{definition}\label{QI-continous}
A $T_0$ space $X$ is called $QI_2$-continuous iff for any $x\in X$, the following conditions
are satisfied:

(1) $w(x) = \{\ua F: F \in X^{(<\omega)}$ and $F \ll_{\I_2} x\}$ is an
irreducible set in $P_S(X)$;

(2) $\ua x=\bigcap  w(x)$;
\end{definition}

\begin{proposition}\label{Irr sup}
For a $T_0$ space $X$, the following two conditions are equivalent:

(1) $X$ is $QI_2$-continuous.

(2) For any $x\in X$, there exists $\mathcal{F}\in Irr(P_s(X))$ such that $\mathcal{F}\subseteq w(x)$ and $\uparrow x=\bigcap \mathcal{F}$.
\end{proposition}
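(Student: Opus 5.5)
(1) $\Rightarrow$ (2) is immediate: take $\mathcal{F}=w(x)$. By condition (1) of Definition \ref{QI-continous}, $w(x)\in \ir(P_S(X))$, and trivially $w(x)\subseteq w(x)$. By condition (2), $\ua x=\bigcap w(x)$.

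For (2) $\Rightarrow$ (1), fix $x\in X$ and choose $\mathcal{F}\in \ir(P_S(X))$ with $\mathcal{F}\subseteq w(x)$ and $\ua x=\bigcap\mathcal{F}$. Two things must be verified.

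First, $\ua x=\bigcap w(x)$. One inclusion is $\bigcap w(x)\subseteq\bigcap\mathcal{F}=\ua x$, which holds because $\mathcal{F}\subseteq w(x)$. For the other, each $\ua F\in w(x)$ satisfies $F\ll_{\I_2}x$. Part (iii) of the Remark after Definition \ref{I-way below} then gives $\ua x\subseteq\ua F$, so $\ua x\subseteq\bigcap w(x)$.

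Second, $w(x)$ is irreducible in $P_S(X)$. The key observation is that $\mathcal{F}\subseteq w(x)\subseteq \cl_{P_S(X)}\mathcal{F}$. Once this is shown, the standard fact applies: any set lying between an irreducible set and its closure is irreducible. That fact follows because the two sets have the same closure, and Lemma \ref{subspace-irr-1} (3) characterizes irreducibility by the closure.

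To show $w(x)\subseteq\cl\mathcal{F}$, take $\ua G\in w(x)$ and a basic open neighbourhood $\Box U$ of $\ua G$, so $U\in\mathcal O(X)$ with $G\subseteq U$. We must show $\mathcal{F}\cap\Box U\neq\emptyset$. Suppose instead that $\ua F\not\subseteq U$ for every $\ua F\in\mathcal{F}$. Then Lemma \ref{Rudin irr open set} gives $\{\ua(F\setminus U):\ua F\in\mathcal{F}\}\in\ir(P_S(X))$. This family still consists of finitely generated upper sets, so Lemma \ref{Rudin irr} yields $D\in\ir(X)$ with $D\subseteq\bigcup_{\ua F\in\mathcal{F}}(F\setminus U)$ and $D$ meeting every $\ua(F\setminus U)$.

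From this, $\bigcap_{d\in D}\ua d\subseteq\bigcap\mathcal{F}=\ua x$, so $x\in D^{\delta}$. Since $G\ll_{\I_2}x$, we get $G\cap\cl D\neq\emptyset$. Then $U\cap\cl D\neq\emptyset$, and since $U$ is open, $U\cap D\neq\emptyset$. This contradicts $D\subseteq X\setminus U$. This is exactly the argument used in the necessity part of Proposition \ref{I-way below1}. Hence some $\ua F\in\mathcal{F}$ lies in $\Box U$, and $\ua G\in\cl\mathcal{F}$.

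The main obstacle is this closure step. Its delicate point is that the application of Lemma \ref{Rudin irr} needs the sets $F\setminus U$ to be nonempty finite sets. That holds because $\ua F\not\subseteq U$ and $U$ is an upper set, which forces $F\not\subseteq U$. Combining the two verified facts gives (1).
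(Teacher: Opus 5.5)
Your proof is correct and is essentially the paper's argument. The paper proves $\mathcal F\subseteq w(x)\subseteq\cl_{P_S(X)}\mathcal F$ by taking a closed $B$ with $\mathcal F\subseteq\Diamond B$ and using Lemma \ref{Rudin irr closed set} with Lemma \ref{Rudin irr}, which is your $\Box U$ argument with $B=X\setminus U$ and Lemma \ref{Rudin irr open set}; you also explicitly justify $\ua x\subseteq\bigcap w(x)$ via Remark (iii), a step the paper leaves implicit.
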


\begin{proof}
(1) $\Rightarrow$ (2): Let $\mathcal{F}=w(x)$. Then $\mathcal{F}\in Irr(P_s(X))$ and $\uparrow x=\bigcap  \mathcal{F}$.

(2) $\Rightarrow$ (1): Let $B$ with $\mathcal{F}\subseteq \diamond B$ be a closed set in $X$. By lemma \ref{Rudin irr closed set} and \ref{Rudin irr}, there exists $D\in Irr(X) $ with $D\subseteq \mathop{\bigcup}\limits_{\ua G \in \mathcal{F}}(G\cap B) $ such that  $D\cap (G\cap B)\neq \emptyset$ for any $\ua G\in \mathcal{F} $.
Then $\mathop{\bigcap}\limits_{d \in D} \uparrow d \subseteq \mathop{\bigcap} \mathcal{F} \subseteq \uparrow x$ and $x\in D^{\delta}$.
For any $\ua F\in w(x)$, we have $F\ll_{\I}x$. Thus $\cl D\cap F\neq\emptyset$, and then $B\cap F\neq\emptyset$. Hence $w(x)\subseteq \diamond B$. So $\mathcal{F}\subseteq w(x)\subseteq \cl_{P_s(X)}\mathcal{F}$. As $\mathcal{F}\in \ir(P_S(X))$, then $w(x)\in \ir(P_S(X))$. Since $\mathcal{F}\subseteq w(x)$ and $\uparrow x=\bigcap  \mathcal{F}$, $\uparrow x\subseteq \bigcap  w(x)\subseteq \bigcap  \mathcal{F}=\uparrow x$, and then $\bigcap  w(x)$. So $X$ is $QI_2$-continuous.
\end{proof}

\begin{proposition}\label{Irr sup2}
Let $X$ be a $T_0$ space, $x\in X$, $F\in X^{(<\omega)}$ and $(x_i)_{i\in I}$ be a net in $X$. Consider the following two conditions:
\begin{enumerate}[\rm (1)]
\item  $(x_i)_{i\in I}\stackrel{GSI_2}\longrightarrow x$.

\item  For any $F\ll_{\I_2} x$, we have $x_i\in U$ eventually whenever $F\subseteq U$ for any $U\in \mathcal{O}(X)$.
\end{enumerate}
\noindent Then \emph{(1)} $\Rightarrow$ \emph{(2)}, and two conditions are equivalent if $X$ is $QI_2$-continuous.
\end{proposition}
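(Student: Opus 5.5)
The implication (1) $\Rightarrow$ (2) is exactly the necessity half of Proposition \ref{I-way below1}. Suppose $(x_i)_{i\in I}\stackrel{GSI_2}\longrightarrow x$, $F\ll_{\I_2}x$ and $F\subseteq U\in\mathcal O(X)$. Proposition \ref{I-way below1} then gives $x_i\in U$ eventually, and nothing more is needed. For completeness I would recall the argument. Take the witnessing family $\mathcal F$ and suppose no $G\in\mathcal F$ satisfies $G\subseteq U$. Lemma \ref{Rudin irr open set} and Lemma \ref{Rudin irr} then yield an irreducible $D\subseteq\bigcup_{G\in\mathcal F}(G\setminus U)$ that meets every $G\setminus U$. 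Hence $\bigcap_{d\in D}\ua d\subseteq\bigcap_{G\in\mathcal F}\ua G\subseteq\ua x$, so $x\in D^{\delta}$. From $F\ll_{\I_2}x$ we get $F\cap\cl D\neq\emptyset$, so $U\cap D\neq\emptyset$, which is a contradiction. Therefore some $G\subseteq U$, and condition (i) gives $x_i\in U$ eventually.

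For (2) $\Rightarrow$ (1), assume $X$ is $QI_2$-continuous. The natural witness is the family $\mathcal F=\{F\in X^{(<\omega)}: F\ll_{\I_2}x\}$, whose upper sets form exactly $w(x)$. Condition (1) of Definition \ref{QI-continous} gives $\{\ua F:F\in\mathcal F\}=w(x)\in\ir(P_S(X))$. Condition (2) of Definition \ref{QI-continous} gives $\bigcap_{F\in\mathcal F}\ua F=\ua x$, which is clause (ii) of Definition \ref{GSI-converges}. For clause (i), let $U\in\mathcal O(X)$ with $w(x)\cap\Box U\neq\emptyset$. Then $\ua F\subseteq U$ for some $F\ll_{\I_2}x$, so in particular $F\subseteq U$, and hypothesis (2) gives $x_i\in U$ eventually. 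Hence $(x_i)_{i\in I}\stackrel{GSI_2}\longrightarrow x$.

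The only point needing care is that $w(x)$ consists of sets $\ua F$ for finite $F$, while Definition \ref{GSI-converges} asks for a family $\mathcal F\subseteq X^{(<\omega)}$. Several finite sets may generate the same upper set, but that causes no trouble here, since both clauses of Definition \ref{GSI-converges} depend only on the family of upper sets $\ua F$. With the witness chosen as $w(x)$, the argument does not need Proposition \ref{Irr sup}.
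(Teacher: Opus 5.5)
Your proposal is correct and follows the paper's proof essentially verbatim: (1)$\Rightarrow$(2) is the necessity half of Proposition~\ref{I-way below1}, and (2)$\Rightarrow$(1) uses $w(x)$ itself as the witnessing family, with the two clauses of QI$_2$-continuity supplying irreducibility and $\bigcap w(x)=\ua x$, and hypothesis (2) supplying clause (i). One small correction to your recap: Lemma~\ref{Rudin irr} actually gives $D\subseteq\bigcup_{G}\ua(G\setminus U)$, not $D\subseteq\bigcup_G(G\setminus U)$; this is harmless, since $\ua(G\setminus U)=\ua G\setminus U$ is still disjoint from the open (hence upper) set $U$.
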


\begin{proof}

(1) $\Rightarrow$ (2): By Proposition \ref{I-way below1}.

(2) $\Rightarrow$ (1): Suppose that $X$ is $QI_2$ continuous. Then $w(x)\in Irr(P_s(X))$ and $\bigcap w(x)=\uparrow x$. For any   $U\in\mathcal O(X)$, if $w(x)\cap \Box U\neq\emptyset$, then there is $\ua F\in w(x)$ with $F\subseteq U$ such that $F\ll_{I_2} x$, and hence $x_i\in U$ eventually by (2). Thus $(x_i)_{i\in I}\stackrel{GSI_2}\longrightarrow x$.

\end{proof}

\begin{proposition}\label{topo-QIC}
Let $X$ be a $T_0$ space. If $GSI_2$-convergence in $X$ is topological, then $X$ is $QI_2$-continuous.
\end{proposition}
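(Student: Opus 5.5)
We follow the standard argument in the style of Andradi et al.\ and Yang--Xu. Let $\tau=\mathcal O(\mathcal{GSI}_2(X))$, which by Theorem \ref{SItopo=GSItopo} equals $\mathcal O_{\SI_2}(X)$. Since the convergence is topological, $\mathcal{GSI}_2(X)=\mathcal C(\tau)$. Hence a net GSI$_2$-converges to $x$ if and only if it is eventually in every SI$_2$-open set containing $x$. By Proposition \ref{Irr sup} it suffices, for each $x\in X$, to exhibit some $\mathcal F\in\ir(P_S(X))$ with $\mathcal F\subseteq w(x)$ and $\bigcap\mathcal F=\ua x$.

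\textbf{Step 1: a test net.} Fix $x$. Consider the index set $I=\{(U,a)\in \mathcal O_{\SI_2}(X)\times X: x\in U,\ a\in U\}$, ordered by $(U,a)\le(V,b)$ iff $V\subseteq U$, with $x_{(U,a)}=a$. This net is directed and is eventually in every SI$_2$-open neighbourhood of $x$, so it converges to $x$ in $\tau$. By topologicality it GSI$_2$-converges to $x$. Hence there is $\mathcal F\subseteq X^{(<\omega)}$ with $\{\ua F:F\in\mathcal F\}\in\ir(P_S(X))$, $\bigcap_{F\in\mathcal F}\ua F\subseteq\ua x$, and condition (i) of Definition \ref{GSI-converges}.

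\textbf{Step 2: each $F\in\mathcal F$ satisfies $F\ll_{\I_2}x$ and $x\in\ua F$.} First, $x\in\ua F$. Otherwise $X\setminus\da x$ is open, contains $F$ (as $F\cap\da x=\emptyset$), and so by (i) the net is eventually in $X\setminus\da x$. But the constant choices $a=x$ occur cofinally, which is a contradiction. Hence $\bigcap\mathcal F=\ua x$.

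Next, the main point: $F\ll_{\I_2}x$. Let $D\in\ir(X)$ with $x\in D^\delta$ and suppose $F\cap\cl D=\emptyset$. Then $W=X\setminus\cl D$ is open and contains $F$, so by (i) there is $(U_0,a_0)$ beyond which $x_{(U,a)}=a\in W$. Take $U=U_0$. Since $U_0$ is SI$_2$-open and $x\in D^\delta\cap U_0$, we have $D\cap U_0\neq\emptyset$. Pick $a\in D\cap U_0$. Then $(U_0,a)\ge(U_0,a_0)$, so $a\in W$, contradicting $a\in D\subseteq\cl D$. Thus $F\ll_{\I_2}x$, i.e.\ $\{\ua F:F\in\mathcal F\}\subseteq w(x)$.

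\textbf{Step 3: conclusion.} Proposition \ref{Irr sup}, (2)$\Rightarrow$(1), now gives that $X$ is QI$_2$-continuous. The main obstacle is designing the index set in Step 1 so that the argument in Step 2 works. This requires pairing each neighbourhood with arbitrary points of it, so that for any SI$_2$-open $U_0$ we may freely choose a witness point from $D\cap U_0$. Directedness of $I$ is immediate, since the ordering only depends on $U$ and finite intersections of SI$_2$-open sets containing $x$ remain such.
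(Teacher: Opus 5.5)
Your proof is correct and follows the paper's argument: you use the same test net indexed by pairs $(U,e)$ with $U$ an SI$_2$-open neighbourhood of $x$, apply topologicality to extract $\mathcal F$, check that $\{\ua F:F\in\mathcal F\}\subseteq w(x)$ and $\bigcap\mathcal F=\ua x$, and conclude by Proposition \ref{Irr sup}. The differences are only cosmetic: you invoke SI$_2$-openness of $U_0$ directly to pick a point of $D\cap U_0$ where the paper goes through Lemma \ref{Irr net}, and you get $x\in\ua F$ from the cofinal constant terms $(U,x)$ rather than from $F\ll_{\I_2}x$.
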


\begin{proof}
By Lemma \ref{SItopo=GSItopo}, $\mathcal O(\mathcal{GSI}_2(X))=\mathcal O_{\SI_2}(X)$. Thus if $GSI_2$-convergence in $X$ is topological, we must have $(x_i)_{i\in I}\stackrel{GSI_2}\longrightarrow x$ iff $(x_i)_{i\in I}$ converges to $x$ with respect to the topology $\mathcal O_{\SI_2}(X)$.
Let $x\in X$. Define $I=\{(U, e)\in\mathcal N_{\SI_2}(x)\times X : e\in U\}$, where $\mathcal N_{\SI_2}(x)$ consists of all open sets containing $x$ in the space $(X, \mathcal O_{\SI_2}(X))$,  and equip $I$ with $\leq$ defined as follows: $(U_1, e_1)\leq (U_2, e_2)$ iff $U_1\supseteq U_2$.
Thus $I$ is a directed set. Define $x_{(U,e)}=e$ for any $(U,e)\in I$. It is easy to see that the net $(x_i)_{i\in I}$ converges to $x$ in $(X, \mathcal O_{\SI_2}(X))$ and hence $(x_i)_{i\in I}\stackrel{GSI_2}{\longrightarrow} x$. So there exists an irreducible set $\mathcal{F}\in \ir(P_S(X))$ such that $(x_i)_{i\in I}$ and $\mathcal{F}$ satisfy conditions (i) and (ii) of Definition \ref{GSI-converges}.

Now we prove that $\mathcal{F}\subseteq w(x)$.
Suppose that $\ua F\in \mathcal{F}$. We verify that $F\ll_{\I_2} x$.
Let $D\in \ir(X)$ and $x\in D^{\delta}$. In order to prove $F\cap\cl D\neq\emptyset$, we only need to prove $U\cap D\neq\emptyset$ for any $F\subseteq U\in \mathcal{O}(X)$. 
As  $(x_{(U, e)})_{(U, e)\in I}\stackrel{GSI_2}{\longrightarrow} x$  and $F\subseteq U\in \mathcal{O}(X)$, by (i) of Definition \ref{GSI-converges} we can know that $x_i\in U$ eventually, and hence there is a SI$_2$-open set $V$ such that $x\in V \subseteq U$. By Lemma \ref{Irr net}, there exists a net $(y_j)_{j\in J}\subseteq D$
such that it GSI$_2$-convergence to $x$ and hence it converges to $x$ in the space $(X, \mathcal O_{\SI_2}(X))$ by Remark \ref{GSIremark}. Then
$y_j\in V$ eventually, and consequently, $y_j\in U$ eventually. Hence $U\cap D\neq\emptyset$. So $\mathcal{F}\subseteq w(x)$. Let $\ua F \in \mathcal{F}$. we know $x\in \uparrow F$, and then $\uparrow x\subseteq\bigcap\mathcal{F}$.
As $\uparrow x\subseteq \bigcap\mathcal{F}\subseteq\uparrow x$, we have $\bigcap\mathcal{F}=\uparrow x$. By Proposition \ref{Irr sup}, $X$ is $QI_2$-continuous.

\end{proof}

\begin{definition}\label{SQICont}
A $T_0$ space $X$ is called \emph{strongly QI$_2$-continuous} if the following  conditions hold:

(i) $X$ is QI$_2$-continuous, and

(ii) For any $F\in X^{(<\omega)}$, $x\in X$ with $F\ll_{\I_2}x$ and $U\in\mathcal O(X)$ with $F\subseteq U$, there exists an SI$_2$-open set $W$ with $x\subseteq W\subseteq U$.
\end{definition}

\begin{remark}
For a dcpo $P$, considering the topological space $(P, \alpha (P))$ , we have that $P$ is $s_2$-quasicontinuous iff $(P, \alpha (P))$ is strongly QI$_2$-continuous.
\end{remark}

\begin{proposition}\label{topo-SQIC}
Let $X$ be  a $T_0$ space. If $GSI_2$-convergence in $X$ is topological, then $X$ is strongly $QI_2$-continuous.
\end{proposition}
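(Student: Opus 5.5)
By Proposition \ref{topo-QIC}, $X$ is already QI$_2$-continuous, so condition (i) of Definition \ref{SQICont} holds. It remains to verify condition (ii). We will do this by testing the topological hypothesis on the same net used in the proof of Proposition \ref{topo-QIC}, but now with respect to the given $F$ and $U$.

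Fix $F\in X^{(<\omega)}$, $x\in X$ with $F\ll_{\I_2}x$, and $U\in\mathcal O(X)$ with $F\subseteq U$. Since GSI$_2$-convergence is topological, Theorem \ref{SItopo=GSItopo} gives that $(x_i)_{i\in I}\stackrel{GSI_2}\longrightarrow x$ if and only if $(x_i)_{i\in I}$ converges to $x$ in $(X,\mathcal O_{\SI_2}(X))$. Let $I=\{(V,e)\in\mathcal N_{\SI_2}(x)\times X : e\in V\}$, ordered by reverse inclusion in the first coordinate, and set $x_{(V,e)}=e$.

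\begin{enumerate}[\rm (1)]
\item This net converges to $x$ in the SI$_2$-topology: given $V\in\mathcal N_{\SI_2}(x)$, every $(V',e)\geq(V,e_0)$ has $e\in V'\subseteq V$. Hence the net GSI$_2$-converges to $x$.
\item Now apply the necessity direction of Proposition \ref{I-way below1} to this net, using $F\ll_{\I_2}x$ and $F\subseteq U\in\mathcal O(X)$. This gives an index $(V_0,e_0)\in I$ such that $x_{(V,e)}\in U$ for all $(V,e)\geq(V_0,e_0)$.
\item Take $W=V_0$, which is SI$_2$-open and contains $x$. For every $e\in W$, the pair $(V_0,e)$ lies in $I$ and satisfies $(V_0,e)\geq(V_0,e_0)$, because the order only compares first coordinates. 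Therefore $e=x_{(V_0,e)}\in U$, so $x\in W\subseteq U$, which is condition (ii).
\end{enumerate}

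The main point needing care is step (3): one has to check that the order on $I$ ignores the second coordinate. This is exactly what allows every point of $V_0$ to appear as a term of the net beyond the eventual index, and hence forces all of $V_0$ into $U$. Step (2) is immediate from Proposition \ref{I-way below1}, since the topological hypothesis enters only through converting SI$_2$-convergence into GSI$_2$-convergence in step (1).
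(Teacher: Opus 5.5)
Your proof is correct and follows essentially the same route as the paper. You get QI$_2$-continuity from Proposition \ref{topo-QIC}, then use the same net indexed by $\mathcal N_{\SI_2}(x)\times X$, which converges in the SI$_2$-topology and hence GSI$_2$-converges, so it is eventually in $U$; the paper cites Proposition \ref{Irr sup2} for this, but its (1)$\Rightarrow$(2) is just Proposition \ref{I-way below1}. Your step (3) also makes explicit what the paper leaves implicit: because the order on $I$ ignores the second coordinate, eventual membership in $U$ forces the whole SI$_2$-open set $V_0\ni x$ into $U$.
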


\begin{proof} Firstly, by Proposition \ref{topo-QIC} $X$ is $QI_2$-continuous. Suppose that $F\ll_{\I_2}x$, $U\in\mathcal O(X)$ and $F\subseteq U$. Then $x\in\dua_{\I_2}F\subseteq U$. Consider the net $(x_i)_{i\in I}$ similarly defined in the proof of Proposition \ref{topo-QIC}, where $I=\{(U, e)\in\mathcal N_{\SI_2}(x)\times X : e\in U\}$.  Then $(x_i)_{i\in I}$ converges to $x$ in $(X, \mathcal O_{\SI_2}(X))$ and  $(x_i)_{i\in I}\stackrel{GSI_2}{\longrightarrow} x$. By Proposition \ref{Irr sup2}, $x_i\in U$ eventually and thus exists a SI$_2$-open set $W$ with $x\subseteq W\subseteq U$. Therefore, $X$ is strongly $QI_2$-continuous.
\end{proof}

\begin{proposition}\label{SQIC-topo}
If $X$ is a strongly $QI_2$-continuous space, then $GSI_2$-convergence in $X$ is topological.
\end{proposition}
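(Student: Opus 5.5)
To show that GSI$_2$-convergence is topological, I will show that for every net $(x_i)_{i\in I}$ and every $x\in X$,
\[
(x_i)_{i\in I}\stackrel{GSI_2}\longrightarrow x \iff (x_i)_{i\in I} \text{ converges to } x \text{ in } (X,\mathcal O_{\SI_2}(X)).
\]
By Theorem \ref{SItopo=GSItopo}, $\mathcal O(\mathcal{GSI}_2(X))=\mathcal O_{\SI_2}(X)$. Hence this equivalence says exactly that $\mathcal{GSI}_2(X)=\mathcal C(\mathcal O(\mathcal{GSI}_2(X)))$.

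\emph{Forward direction.} Suppose $(x_i)_{i\in I}\stackrel{GSI_2}\longrightarrow x$ and $x\in V\in\mathcal O_{\SI_2}(X)$. Since $V\in\mathcal O(\mathcal{GSI}_2(X))$ by Theorem \ref{SItopo=GSItopo}, we get $x_i\in V$ eventually. This direction needs no hypothesis on $X$.

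\emph{Reverse direction.} Assume $(x_i)_{i\in I}$ converges to $x$ in the SI$_2$-topology. Since $X$ is QI$_2$-continuous, Proposition \ref{Irr sup2} reduces the goal to its condition (2). So let $F\ll_{\I_2}x$ and $U\in\mathcal O(X)$ with $F\subseteq U$; we must show $x_i\in U$ eventually.
\begin{enumerate}
\item Apply condition (ii) of Definition \ref{SQICont}. It gives an SI$_2$-open set $W$ with $x\in W\subseteq U$.
\item By SI$_2$-convergence of the net to $x$, we have $x_i\in W$ eventually.
\item Hence $x_i\in U$ eventually, which is condition (2) of Proposition \ref{Irr sup2}.
\end{enumerate}
Proposition \ref{Irr sup2} then yields $(x_i)_{i\in I}\stackrel{GSI_2}\longrightarrow x$, witnessed by the family $w(x)$. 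This family lies in $\ir(P_S(X))$ and satisfies $\bigcap w(x)=\uparrow x$ by QI$_2$-continuity.

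\emph{Main obstacle.} The only delicate point is the use of Proposition \ref{Irr sup2}, (2) $\Rightarrow$ (1). Its condition (i) quantifies over all open $U$ that contain some $F$ with $\ua F\in w(x)$, and each such $F$ satisfies $F\ll_{\I_2}x$ by the definition of $w(x)$. That is exactly what the three steps above supply, so the quantifiers match. Condition (ii) of Definition \ref{GSI-converges} is simply $\bigcap w(x)=\uparrow x$. Everything else is assembling earlier results.
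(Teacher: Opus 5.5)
Your proof is correct and follows essentially the paper's argument. The forward direction is the inclusion $\mathcal O_{\SI_2}(X)\subseteq\mathcal O(\mathcal{GSI}_2(X))$ from Theorem \ref{SItopo=GSItopo}, which the paper re-derives inline via Proposition \ref{rudin yunyong}; the reverse direction takes $w(x)$ as the witnessing family and uses condition (ii) of strong QI$_2$-continuity to get an SI$_2$-open $W$ with $x\in W\subseteq U$, exactly as the paper does, except that you package it through Proposition \ref{Irr sup2}.
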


\begin{proof}
Let $(x_i)_{i\in I}$ be a net in $X$ and $x\in X$. If $(x_i)_{i\in I}\stackrel{GSI_2}\longrightarrow x$ and $x\in U\in \mathcal O_{\SI_2}(X)$, there exists an irreducible set $\mathcal{F}=\{\ua F:F\in X^{(<\omega)}\}\in \ir(P_S(X))$ such that $\bigcap \mathcal{F}\subseteq \uparrow x\subseteq U$. By Proposition \ref{rudin yunyong}, there exists an set $\ua F\in \mathcal{F}$ such that $\ua F\subseteq U$, that is, $\mathcal{F}\cap \Box U\neq\emptyset$. Thus, $x_i\in U$ eventually and $(x_i)_{i\in I}$ converges to $x$ in $(X, \mathcal O(\mathcal{GSI}_2(X)))=(X, \mathcal O_{\SI_2}(X))$.

Conversely, suppose that $(x_i)_{i\in I}$ converges to $x$ in $(X, \mathcal O(\mathcal{GSI}_2(X)))$. Then by Lemma \ref{SItopo=GSItopo}, $(x_i)_{i\in I}$ converges to $x$ with respect to the topology $\mathcal O_{\SI_2}(X)$. Now we prove that $(x_i)_{i\in I}\stackrel{GSI_2}\longrightarrow x$. Since $X$ is a strongly $QI_2$-continuous space, $w(x)\in Irr(P_s(X))$ and  $\uparrow x=\bigcap w(x)$. Let $V\in \mathcal{O}(X)$ with $w(x)\cap \Box V\neq\emptyset$.  Hence there is a set $\ua F\in w(x)$ such that $\ua F\subseteq V$. As $X$ is a strongly $QI_2$-continuous space, there is a SI$_2$-open $W$ such that $x\in W\subseteq V$, and hence $x_i\in W$ eventually. Therefore, $x_i\in V$ eventually, that is, $(x_i)_{i\in I}\stackrel{GSI_2}\longrightarrow x$. Thus $GSI_2$-convergence is topological.

\end{proof}

By  Proposition \ref{SQIC-topo} and Proposition \ref{topo-SQIC}, we get the main result of this paper.

\begin{theorem}\label{3tl23}
For an  $T_0$ space $X$, the following conditions are equivalent:
\begin{enumerate}[\rm (1)]
\item  The $GSI_2$-convergence structure in $X$ is topological.

\item  For any net $(x_i)_{i\in I}$ in $X$ and $x\in X$, $(x_i)_{i\in I}\stackrel{GSI_2}\longrightarrow x$ iff $(x_i)_{i\in I}$ converges to $x$ with respect to the SI$_2$-topology $\mathcal O_{\SI_2}(X)$.

\item  $X$ is strongly $QI_2$-continuous.
 \end{enumerate}
\end{theorem}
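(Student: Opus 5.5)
The plan is to close the cycle of implications (1) $\Rightarrow$ (2) $\Rightarrow$ (1) and (1) $\Leftrightarrow$ (3), using only results already established. Throughout, the basic tool is Theorem \ref{SItopo=GSItopo}, which identifies the topology generated by $\mathcal{GSI}_2(X)$ with the SI$_2$-topology, $\mathcal O(\mathcal{GSI}_2(X))=\mathcal O_{\SI_2}(X)$.

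For (1) $\Leftrightarrow$ (2), recall that a convergence class $\mathcal L$ is topological exactly when $\mathcal L=\mathcal C(\mathcal O(\mathcal L))$. With $\mathcal L=\mathcal{GSI}_2(X)$, Theorem \ref{SItopo=GSItopo} gives $\mathcal C(\mathcal O(\mathcal{GSI}_2(X)))=\mathcal C(\mathcal O_{\SI_2}(X))$. The latter is precisely the class of pairs $((x_i)_{i\in I},x)$ such that $(x_i)_{i\in I}$ converges to $x$ in the SI$_2$-topology. So statement (1), read as $\mathcal{GSI}_2(X)=\mathcal C(\mathcal O_{\SI_2}(X))$, is literally statement (2), and both directions are immediate. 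The only point needing care is to state explicitly the meaning of ``topological'' from the introduction.

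For (1) $\Rightarrow$ (3), I would simply invoke Proposition \ref{topo-SQIC}. That proposition relies on Proposition \ref{topo-QIC}, which yields QI$_2$-continuity. It then obtains condition (ii) of Definition \ref{SQICont} from the canonical net indexed by $\mathcal N_{\SI_2}(x)$, combined with Proposition \ref{Irr sup2}.

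For (3) $\Rightarrow$ (1), I would invoke Proposition \ref{SQIC-topo}, which shows both inclusions between $\mathcal{GSI}_2(X)$ and $\mathcal C(\mathcal O_{\SI_2}(X))$.
\begin{itemize}
\item GSI$_2$-convergence implies SI$_2$-convergence. This uses Proposition \ref{rudin yunyong}: for an SI$_2$-open $U\ni x$, it produces some $\ua F$ in the irreducible family with $F\subseteq U$.
\item SI$_2$-convergence implies GSI$_2$-convergence. Here one takes the family $w(x)$ as the witness and uses strong QI$_2$-continuity to insert an SI$_2$-open set $W$ with $x\in W\subseteq V$ whenever $\ua F\in w(x)$ and $\ua F\subseteq V$.
\end{itemize}

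The main obstacle is therefore already resolved in the cited propositions, namely the Rudin-type argument in Proposition \ref{rudin yunyong}. In assembling the theorem, the step deserving attention is checking that the hypotheses used there match. In particular, the witness family $\{\ua F: F\in\mathcal F\}$ must be irreducible in $P_S(X)$ and must satisfy $\bigcap\mathcal F\subseteq\ua x$; both are supplied directly by Definition \ref{GSI-converges}. No irreducible-completeness is needed for this version, since Proposition \ref{rudin yunyong} was proved for arbitrary $T_0$ spaces.
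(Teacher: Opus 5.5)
Your proposal is correct and follows the paper's route: the paper likewise obtains the theorem by combining Proposition \ref{topo-SQIC} for (1)$\Rightarrow$(3) and Proposition \ref{SQIC-topo} for (3)$\Rightarrow$(1). In both arguments, (1)$\Leftrightarrow$(2) is just the reformulation supplied by Theorem \ref{SItopo=GSItopo}, namely $\mathcal O(\mathcal{GSI}_2(X))=\mathcal O_{\SI_2}(X)$; your explicit reading of ``topological'' as $\mathcal{GSI}_2(X)=\mathcal C(\mathcal O(\mathcal{GSI}_2(X)))$ makes precise a step the paper leaves implicit.
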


For a poset $P$, considering the topological space $(P, \alpha (P))$ , we have the following fact:

\begin{corollary} \emph{(\cite{RX})}
 For a poset $P$, the following conditions are equivalent:
\begin{enumerate}[\rm (1)]
\item  $GS_2$-convergence structure in $P$ is topological.

\item  For any net $(x_i)_{i\in I}$ in $X$ and $x\in X$, $(x_i)_{i\in I}$ $GS_2$--converges to $x$ iff $(x_i)_{i\in I}$ converges to $x$ with respect to the weakly Scott topology $\sigma_2(P)$.

\item  $P$ is $s_2$-quasicontinuous.
\end{enumerate}
\end{corollary}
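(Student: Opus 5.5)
The corollary follows by specializing Theorem \ref{3tl23} to the $T_0$ space $(P,\alpha(P))$. The plan is to build a dictionary between the topological notions for $(P,\alpha(P))$ and the order-theoretic notions for $P$, and then translate each of the three conditions.

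\textbf{Step 1: the basic dictionary.}
\begin{itemize}
\item[(a)] In $(P,\alpha(P))$ the specialization order is the original order of $P$, since $\cl\{y\}=\da y$. So cuts, upper bounds and $\ua F$ keep their meaning.
\item[(b)] The irreducible subsets of $(P,\alpha(P))$ are exactly the directed subsets of $P$. Closed sets are lower sets. If $A$ is irreducible and $a,b\in A$, then $A\not\subseteq \da a\cup(P\setminus\ua\{a,b\}^{\ua}\ldots)$. More cleanly, I would argue as follows: if $a,b$ had no common upper bound in $A$, then $A\subseteq (A\setminus\ua a)\cup(A\setminus\ua b)$, and each of $\da(A\setminus\ua a)$ and $\da(A\setminus \ua b)$ is closed. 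This contradicts irreducibility, because $a\notin\da(A\setminus\ua a)$ and $b\notin \da(A\setminus\ua b)$. The converse, that directed sets are irreducible, is standard.
\item[(c)] Consequently $\mathcal O_{\SI_2}(P,\alpha(P))=\sigma_2(P)$, reading off Definition \ref{2dy7} against the definition of the weakly Scott topology. Likewise $F\ll_{\I_2}x$ in $(P,\alpha(P))$ iff $F$ approximates $\{x\}$: here $\cl D=\da D$, so $F\cap\cl D\neq\emptyset$ iff $d\in\ua F$ for some $d\in D$.
\end{itemize}

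\textbf{Step 2: translating the first two conditions.} By Remark \ref{GSIremark}(3), GSI$_2$-convergence in $(P,\alpha(P))$ coincides with GS$_2$-convergence in $P$. This relies on Proposition \ref{4.3} together with the correspondence between families $\{\ua F\}$ that are irreducible in $P_S(P,\alpha(P))$ and directed families of finite sets. Given Step 1(c), conditions (1) and (2) of the corollary are then literally conditions (1) and (2) of Theorem \ref{3tl23} for $(P,\alpha(P))$.

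\textbf{Step 3: strong QI$_2$-continuity versus $s_2$-quasicontinuity.} This is the main step.
\begin{itemize}
\item[(i)] \emph{Interpolation clause.} Every upper set is open in $\alpha(P)$. Given $F\ll_{\I_2}x$ and $F\subseteq U$, take $W=\dua_{\I_2}F$. To see that $W$ is $\sigma_2$-open, let $D$ be directed with $D^\delta\cap\dua_{\I_2}F\neq\emptyset$, say $D^\delta\ni y\gg F$. Then $F\ll_{\I_2}y$ gives $d\in\ua F$ for some $d\in D$. The hard part is to show $d\in\dua_{\I_2}F$ itself, which needs an interpolation property. In the $s_2$-quasicontinuous case I would obtain interpolation for finite sets by the standard Rudin-type argument using Proposition \ref{rudin yunyong}. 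Alternatively, I would avoid interpolation entirely and invoke the remark preceding Proposition \ref{topo-SQIC}, which asserts this equivalence for dcpos.
\item[(ii)] \emph{Irreducible-family clause.} The condition $w(x)\in\ir(P_S)$ with $\bigcap w(x)=\ua x$ should match "directed intersection of $\ua F$ with $F$ approximating $x$". For this I would use Proposition \ref{Irr sup}, noting that for the Alexandrov topology, irreducibility in $P_S$ of a family of principal finitely generated upper sets is equivalent to directedness under reverse inclusion. The argument parallels Step 1(b), since $\Box\ua F$ is open.
\end{itemize}

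The main obstacle is Step 3(i), and the fact that the remark is stated only for dcpos. For general posets I would prove interpolation directly. Suppose $F$ approximates $x$. Using directedness of $w(x)$ and $\bigcap w(x)=\ua x$, I would show, via Proposition \ref{rudin yunyong} applied to the SI$_2$-open set built from $\bigcup_{G\in w(x)}\ii\,\ua G$, that some $G$ with $G\ll_{\I_2}x$ satisfies $G\subseteq\dua_{\I_2}F$.
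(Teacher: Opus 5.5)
\textbf{Verdict: same route as the paper, but the one non-formal step is not closed.} Your route is the paper's: specialize Theorem \ref{3tl23} to $(P,\alpha(P))$. Your Steps 1, 2 and 3(ii) are sound. They give irreducible $=$ directed, $\mathcal O_{\SI_2}=\sigma_2(P)$, $\ll_{\I_2}$ $=$ ``approximates'', and GSI$_2=$ GS$_2$. They also give QI$_2$-continuity of $(P,\alpha(P))$ $\Leftrightarrow$ $s_2$-quasicontinuity via Proposition \ref{Irr sup}, since irreducible in $P_S(P,\alpha(P))$ means filtered.

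\textbf{The gap is Step 3(i).} You must show that $s_2$-quasicontinuity yields clause (ii) of strong QI$_2$-continuity, and your plan for this does not work.
\begin{itemize}
\item Proposition \ref{rudin yunyong} only applies to a set already known to be SI$_2$-open. The set you need it for, $\dua_{\I_2}F$, is exactly the set whose SI$_2$-openness is in question, so the argument is circular.
\item The set built from $\bigcup_{G\in w(x)}\ii\,\ua G$ is just $\bigcup_{G\in w(x)}\ua G$ in $\alpha(P)$. It is not known to be SI$_2$-open. Even if it were, ``some $G\in w(x)$ lies in it'' is trivial and says nothing about $\dua_{\I_2}F$.
\item A direct Rudin argument on $w(x)$ alone also stalls. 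It yields a directed $E\subseteq P\setminus\dua_{\I_2}F$ with $x\in E^\delta$, and $F\ll_{\I_2}x$ then only puts some $e\in E$ into $\ua F$, not into $\dua_{\I_2}F$.
\item The remark before Proposition \ref{topo-SQIC} is stated, without proof, for dcpos only.
\end{itemize}
The paper itself offers nothing beyond ``consider $(P,\alpha(P))$''. So you found the crux correctly but did not resolve it.

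\textbf{The missing idea.} Prove openness of $\dua_{\I_2}F$ first, applying Rudin's lemma to the union of the approximating families along $D$, not to $w(x)$.
\begin{itemize}
\item Let $D$ be directed and $y\in D^\delta$ with $F\ll_{\I_2}y$. Put $\mathcal A=\bigcup_{d\in D}w(d)$.
\item Since $w(d)\subseteq w(d')$ for $d\le d'$ and each $w(d)$ is filtered, $\mathcal A$ is filtered. Moreover $\bigcap\mathcal A=\bigcap_{d\in D}\ua d=D^{\ua}$.
\item Suppose no $\ua G\in\mathcal A$ had $G\subseteq\ua F$. Then the lower set $B=P\setminus\ua F$ meets every member of $\mathcal A$.
\item Lemmas \ref{Rudin irr closed set} and \ref{Rudin irr} then give a directed $E\subseteq B$ meeting every member of $\mathcal A$. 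Hence $E^{\ua}\subseteq\bigcap\mathcal A=D^{\ua}$, so $y\in D^\delta\subseteq E^\delta$.
\item Now $F\ll_{\I_2}y$ forces $E\cap\ua F\neq\emptyset$, contradicting $E\subseteq B$.
\item So some $G\ll_{\I_2}d$ with $d\in D$ has $\ua G\subseteq\ua F$. Then $F\ll_{\I_2}d$, i.e.\ $D\cap\dua_{\I_2}F\neq\emptyset$.
\end{itemize}
Thus $\dua_{\I_2}F$ is SI$_2$-open, and $W=\dua_{\I_2}F$ satisfies $x\in W\subseteq\ua F\subseteq U$.

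Interpolation ($F\ll_{\I_2}G\ll_{\I_2}x$) then follows afterwards, from Proposition \ref{rudin yunyong} applied to this open set and $w(x)$. This is the reverse of the order you proposed.
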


\end{document}